\theoremstyle{plain}
\newtheorem{theorem}{Theorem}[section]
\newtheorem{proposition}[theorem]{Proposition}
\newtheorem{corollary}[theorem]{Corollary}
\newtheorem{lemma}[theorem]{Lemma}
\theoremstyle{definition}
\newtheorem{definition}[theorem]{Definition}
\theoremstyle{remark}
\newtheorem{remark}[theorem]{Remark}
\begin{document}

\bibliographystyle{plain}

\newcommand{\Rn}{\mathbb R^n}
\newcommand{\E}{\mathbb E}
\newcommand{\Rm}{\mathbb R^m}
\newcommand{\rn}[1]{{\mathbb R}^{#1}}
\newcommand{\R}{\mathbb R}
\newcommand{\C}{\mathbb C}
\newcommand{\G}{\mathbb G}
\newcommand{\M}{\mathbb M}
\newcommand{\Z}{\mathbb Z}
\newcommand{\D}[1]{\mathcal D^{#1}}
\newcommand{\Ci}[1]{\mathcal C^{#1}}
\newcommand{\Ch}[1]{\mathcal C_{\mathbb H}^{#1}}
\renewcommand{\L}[1]{\mathcal L^{#1}}
\newcommand{\BVG}{BV_{\G}(\Omega)}
\newcommand{\supp}{\mathrm{supp}\;}

\newcommand{\dom}{\mathrm{Dom}\;}
\newcommand{\Ast}{\un{\ast}}

\newcommand{\average}{{\mathchoice {\kern1ex\vcenter{\hrule height.4pt
width 6pt depth0pt} \kern-9.7pt} {\kern1ex\vcenter{\hrule height.4pt width
4.3pt depth0pt}
\kern-7pt} {} {} }}
\newcommand{\ave}{\average\int}

\newcommand{\hhd}[1]{{\mathcal H}_d^{#1}}
\newcommand{\hsd}[1]{{\mathcal S}_d^{#1}}

\newcommand{\he}[1]{{\mathbb H}^{#1}}
\newcommand{\hhe}[1]{{H\mathbb H}^{#1}}

\newcommand{\cov}[1]{{\bigwedge\nolimits^{#1}{\mfrak g}}}
\newcommand{\vet}[1]{{\bigwedge\nolimits_{#1}{\mfrak g}}}

\newcommand{\covw}[2]{{\bigwedge\nolimits^{#1,#2}{\mfrak g}}}

\newcommand{\vetfiber}[2]{{\bigwedge\nolimits_{#1,#2}{\mfrak g}}}
\newcommand{\covfiber}[2]{{\bigwedge\nolimits^{#1}_{#2}{\mfrak g}}}

\newcommand{\covwfiber}[3]{{\bigwedge\nolimits^{#1,#2}_{#3}{\mfrak g}}}

\newcommand{\covv}[2]{{\bigwedge\nolimits^{#1}{#2}}}
\newcommand{\vett}[2]{{\bigwedge\nolimits_{#1}{#2}}}

\newcommand{\covvfiber}[3]{{\bigwedge\nolimits^{#1}_{#2}{#3}}}

\newcommand{\vettfiber}[3]{{\bigwedge\nolimits_{#1,#2}{#3}}}

\newcommand{\covn}[2]{{\bigwedge\nolimits^{#1}\rn {#2}}}
\newcommand{\vetn}[2]{{\bigwedge\nolimits_{#1}\rn {#2}}}
\newcommand{\covh}[1]{{\bigwedge\nolimits^{#1}{\mfrak h_1}}}
\newcommand{\veth}[1]{{\bigwedge\nolimits_{#1}{\mfrak h_1}}}
\newcommand{\hcov}[1]{{_H\!\!\bigwedge\nolimits^{#1}}}
\newcommand{\hvet}[1]{{_H\!\!\bigwedge\nolimits_{#1}}}

\newcommand{\covf}[2]{{\bigwedge\nolimits^{#1}_{#2}{\mfrak h}}}
\newcommand{\vetf}[2]{{\bigwedge\nolimits_{#1,#2}{\mfrak h}}}
\newcommand{\covhf}[2]{{\bigwedge\nolimits^{#1}_{#2}{\mfrak h_1}}}
\newcommand{\vethf}[2]{{\bigwedge\nolimits_{#1,#2}{\mfrak h_1}}}
\newcommand{\hcovf}[2]{{_H\!\!\bigwedge\nolimits^{#1}_{#2}}}
\newcommand{\hvetf}[2]{{_H\!\!\bigwedge\nolimits_{#1,#2}}}

\newcommand{\defin}{\stackrel{\mathrm{def}}{=}}
\newcommand{\gradh}{\nabla_H}
\newcommand{\current}[1]{\left[\!\left[{#1}\right]\!\right]}
\newcommand{\scal}[2]{\langle {#1} , {#2}\rangle}
\newcommand{\escal}[2]{\langle {#1} , {#2}\rangle_{\mathrm{Euc}}}
\newcommand{\Scal}[2]{\langle {#1} \vert {#2}\rangle}
\newcommand{\scalp}[3]{\langle {#1} , {#2}\rangle_{#3}}
\newcommand{\dc}[2]{d_c\left( {#1} , {#2}\right)}
\newcommand{\res}{\mathop{\hbox{\vrule height 7pt width .5pt depth 0pt
\vrule height .5pt width 6pt depth 0pt}}\nolimits}
\newcommand{\norm}[1]{\left\Vert{#1}\right\Vert}
\newcommand{\modul}[2]{{\left\vert{#1}\right\vert}_{#2}}
\newcommand{\perh}{\partial_\mathbb H}

\newcommand{\ccheck}{{\vphantom i}^{\mathrm v}\!\,}

\newcommand{\wcheck}{{\vphantom i}^{\mathrm w}\!\,}

\newcommand{\mc}{\mathcal }
\newcommand{\mbf}{\mathbf}
\newcommand{\mfrak}{\mathfrak}
\newcommand{\mrm}{\mathrm}
\newcommand{\no}{\noindent}
\newcommand{\dis}{\displaystyle}

\newcommand{\U}{\mathcal U}
\newcommand{\ga}{\alpha}
\newcommand{\gb}{\beta}
\newcommand{\gga}{\gamma}
\newcommand{\gd}{\delta}
\newcommand{\eps}{\varepsilon}
\newcommand{\gf}{\varphi}
\newcommand{\GF}{\varphi}
\newcommand{\gl}{\lambda}
\newcommand{\GL}{\Lambda}
\newcommand{\gp}{\psi}
\newcommand{\GP}{\Psi}
\newcommand{\gr}{\varrho}
\newcommand{\go}{\omega}
\newcommand{\gs}{\sigma}
\newcommand{\gt}{\theta}
\newcommand{\gx}{\xi}
\newcommand{\GO}{\Omega}

\newcommand{\Wedge}{\buildrel\circ\over \wedge}

\newcommand{\WO}[4]{\mathop{W}\limits^\circ{}\!_{#4}^{{#1},{#2}}
(#3)}

\newcommand{\GH}{H\G}
\newcommand{\N}{\mathbb N}

%

\newcommand{\Nhmin}{N_h^{\mathrm{min}}}
\newcommand{\Nhmax}{N_h^{\mathrm{max}}}

\newcommand{\Mhmin}{M_h^{\mathrm{min}}}
\newcommand{\Mhmax}{M_h^{\mathrm{max}}}

\newcommand{\un}[1]{\underline{#1}}

\newcommand{\curl}{\mathrm{curl}\;}
\newcommand{\curlh}{\mathrm{curl}_{\he{}}\;}
\newcommand{\hd}{\hat{d_c}}
\newcommand{\divg}{\mathrm{div}_\G\,}
\newcommand{\divgh}{\mathrm{div}_{\hat\G}\,}
\newcommand{\divh}{\mathrm{div}_{\he{}}\,}
\newcommand{\e}{\mathrm{Euc}}


\newcommand{\hatcov}[1]{{\bigwedge\nolimits^{#1}{\hat{\mfrak g}}}}

\title[Harnack inequality for fractional sub-Laplacians in Carnot groups
] {Harnack inequality for fractional sub-Laplacians in Carnot groups
}

\author[ F. Ferrari, B. Franchi]{Fausto Ferrari
\endgraf \medskip \endgraf Bruno Franchi }

\thanks{The authors are supported by  MURST, Italy, and
by University of Bologna, Italy, funds for selected research topics
and by EC project CG-DICE}

\thanks{}
%

%

%
\keywords{Carnot groups, heat kernel, fractional powers of sub-Laplacian, Harnack inequality}

\subjclass{26A33, 35R03
}

\begin{abstract} 
In this paper we prove an invariant Harnack inequality on Carnot-Carath\'eodory balls
for fractional powers of sub-Laplacians in Carnot groups. The proof relies on an ``abstract'' formulation of
a technique recently introduced by Caffarelli and Silvestre. In addition, we write
explicitly the Poisson kernel for a class of degenerate subelliptic equations in
product-type Carnot groups.
\end{abstract}

\maketitle

\tableofcontents

\section{Introduction}\label{introduction} In Euclidean spaces, fractional operators have been 
studied in connection with different phenomena that can be described as isotropic diffusion
with jumps. We mention, for instance, the thin obstacle problem, 
phase transition problems, and  the study of a general class of conformally covariant operators
in conformal geometry: see, for instance, \cite{caffarelli_salsa_silvestre},  \cite{sire_valdinoci} and \cite{chang_gonzalez}. 
Typically, these problems can be reduced, in their simplest form, to the study of the
equation
\begin{equation}\label{itro eq:1}
(-\Delta)^{\gamma/2} u = f  \quad\mbox{in $\rn n$,}
\end{equation}
where $0<\gamma<2$. We remind that the fractional Laplacian in \eqref{itro eq:1} is
a nonlocal operator (even more: it is a {\sl antilocal operator}, see \cite{masuda}).
Nevertheless, solutions of \eqref{itro eq:1} share some properties of the solutions
of elliptic equations. More precisely:
\begin{itemize}
\item $(-\Delta)^{\gamma/2} $ is the infinitesimal generator of
 a Feller semigroup $\{T_t\}_{t>0}$. This
means that, if $0\le f\le 1$, then $0\le T_tf \le 1$ for $t>0$. By a classical result (see P. L\'evy \cite{levy},
G. A. Hunt \cite{hunt}, Courr\`ege \cite{courrege} and Bony-Courr\`ege-Priouret 
\cite{bony_courrege_priouret}), this is equivalent to say that
$(-\Delta)^{\gamma/2} $ belongs to a larger class of pseudodifferential operators satisfying
 the so-called {\sl positive maximum principle}. We refer
to \cite{courrege} and \cite{bony_courrege_priouret} for an exhaustive discussion; here
we restrict ourselves to stress that the positive maximum principle is not the usual
maximum principle of potential theory. 
\item recently, L. Caffarelli \& L. Silvestre \cite{caffarelli_silvestre}  proved that functions
$u$ that are positive {\sl on all of $\rn n$} and
solve the equation $(-\Delta)^{\gamma/2}u =0 $  {\sl in an open set $\Omega\subset\rn n$} 
satisfy an invariant local Harnack inequality.   Their technique relies on an extension (or
`lifting'')  procedure, showing ultimately that $u$ can be
extended to a function $\tilde v$ on $\rn{n+1}$ satisfying a (degenerate) elliptic {\it differential}
equation.

We remind also that related results have been proved by different methods by N.S. Landkof \cite{landkof} and
K. Bogdan \cite{bogdan}.
\end{itemize}
On the other hand, 
\begin{itemize}
\item Hunt's theorem in \cite{hunt} applies to a larger class of differential
operators in Lie groups;
\item sub-Laplacians in Carnot groups (i.e. in connected and simply connected
stratified nilpotent Lie groups) exhibit strong analogies with classical Laplace
operator in the Euclidean space (for instance Harnack inequality, maximum principle, 
existence and estimates of the fundamental solution). 
\end{itemize}
It is therefore natural to ask whether Caffarelli \& Silvestre's approach can be adapted
to prove a Harnack inequality for subelliptic fractional equations of the form
$$
\mc L^{\gamma/2} u = 0,
$$
where $\mc L$ is a (positive) sublaplacian in a Carnot group $\G$. 

 In fact, an ``abstract'' extension technique akin to that of Caf\-fa\-rel\-li-Silvestre has been recently developed
in a general setting by Stinga \& Torrea in \cite{Stinga_Torrea}, under very mild hypotheses 
on the operator $\mathcal{L}$. In particular, they obtained the Harnack inequality 
for the (fractional) harmonic oscillator. In addition, using analogous arguments, Stinga \& Zhang \cite{Stinga_Zhang} proved a Harnack inequality  
for  a larger class of fractional operators, containing, for instance, Ornstein-Uhlenbeck operators. 
However, we stress that
subelliptic operators in Carnot groups, though, as a matter of fact, fitting in the wide class of
``degenerate elliptic operators'', do not belong to the class of degenerate
operators considered in \cite{Stinga_Zhang}. Indeed, the degeneration considered in \cite{Stinga_Zhang} is
described by means of $A_2$-weights that may vanish only on sets of finite
Lebesgue measure. On the contrary, subelliptic Laplacians, when considered as
degenerate elliptic operators,  may in fact degenerate on all the space. 
In other words, the degeneration induced by weights is a ``degeneration of the measure'', whereas
subelliptic Laplacians could be considered as Laplace-Beltrami operators for a degenerate
geometry. 

Typically, if we forget the potentials, operators as in \cite{Stinga_Zhang} have the form
$$
\big(-\mathrm{div}\,(|x|^\alpha \nabla u)\big)^{\gamma/2}, \quad -n < \alpha < n
$$
in $\rn n$, whereas, the simplest instance of our operators is provided by the fractional sub-Laplacian
of the first Heisenberg group $\he 1$
$$
\big( -(\partial_x +2y\partial_z)^2 u - (\partial_y -2x\partial_z)^2 u\big)^{\gamma/2}
$$
in $\rn 3$. Some comments in this sense can be found already in \cite{FS_ASNP}.

  In this paper
we further develop the idea of an abstract approach to the problem.
However, the setting of Carnot groups,
with a natural notion of group convolution, makes possible to recover, starting from
the abstract representation in terms of the spectral resolution, another explicit
form of the fractional powers (in terms of convolutions with singular kernels),
as well as of the lifting operator (in terms of the convolution with a suitable Poisson kernel).

We like also to mention  that, in the special case of Heisenberg groups, an explicit
representation of the Poisson kernel is given also in \cite{gonzalez_tan} through
different methods (group Fourier transform).

To state our main result,
we need preliminarily to remind that in any Carnot group we can define a left-invariant
distance $d_c$ (the so-called Carnot-Carath\'eodory distance) that fits the structure of
the group. If we denote by $B_c = B_c(x,r)$ ($x\in \G$ and $r>0$) the metric balls associated
with $d_c$ and by $W^{s,2}_\G$ the Folland-Stein Sobolev space in $\G$ (see Section \ref{introduction}
for details), then the Harnack inequality for fractional sub-Laplacians takes an
invariant intrinsic form. More precisely, we have:

\medskip

\noindent\textbf{Theorem}
\textsl{ 
Let $-1<a<1$ and let $u\in W^{1-a,2}_\G(\G)$ be given, $u\ge 0$ on all of $\G.$ Assume $\mc L^{(1-a)/2}u = 0$
in an open set $\Omega\subset\G$. 
\\
\hphantom{xxx}Then  there exist $C,b>0$ (independent of $u$) such that
the following invariant Harnack inequality holds:
$$
\sup_{B_c(x,r)} u \le C \inf_{B_c(x,r)} u 
$$
for any metric ball $B_c(x,r)$ such that $B_c(x,b r)\subset \Omega$. 
}

\medskip

Let us sketch briefly the main features of our proof. Basically, still following \cite{caffarelli_silvestre},
its core consists in the construction of a $\mc L$-harmonic ``lifting'' operator $u=u(x)\to v=v(x,y)$ from $\G$ to $\G\times\R^+$ 
by means of the spectral resolution of $\mc L$ in $L^2(\G)$ in such a way that $u$ is the trace of the normal derivative of $v$ on $y=0$.  If, in particular, $a=0$, then this operator
is nothing but the semigroup generated by $-\mc L^{1/2}$.

 Subsequently, as in \cite{caffarelli_silvestre}, we show that, if $\mathcal{L}^{\frac{1-a}{2}}u=0$ in an open set $\Omega$ then its lifting $v$ can be continued  by parity across $y=0$  to  a weak solution $\tilde{v}$ of the equation
 $$
\tilde{\mathcal{L}}\tilde{v}:=-|y|^a\mc L \tilde{v} +\partial_y (|y|^a \partial_y \tilde{v})=0.
 $$ 
 In addition we show that the lifting operator  can be
also written as a convolution operator with a positive kernel $P_{\G}$, that is written explicitly. Thus $\tilde{v}\geq 0$  if  $u\geq 0$ on all $\G,$ and therefore  our problem  reduces to prove Harnack inequality for a weighted sub-elliptic differential  operator.
The construction of $P_{\G}$  not only yields the possibility of replacing the assumption $u\in W^{1-a,2}_\G(\G)$ by some weaker
assumptions on the behavior of $u$ at infinity (in the spirit of some remarks in \cite{caffarelli_silvestre}), but 
provides an explicit form for the Poisson kernel $P_\G(\cdot,y)$
 in the half-space $\G\times (0,\infty)$ for 
 $
\tilde{\mathcal{L}}.
 $ 
More precisely, if we denote by $h(t,\cdot)$ the heat kernel associated with $-\mc L$ as in \cite{folland}, then
\begin{equation*}
P_\G(\cdot,y) : = C_a\, y^{1-a} \int_0^\infty t^{(a-3)/2} e^{- \frac{y^2}{4t}}  h(t,\cdot)\; dt,
\end{equation*}
where
$$C_a = {2^{a-1}}{\Gamma((1-a)/2)}^{-1}.
$$
 A similar formula appears in \cite{Stinga_Torrea}, but, as long as we know, this representation is new
 for sublaplacians in Carnot groups.
 
The paper is organized as follows: in Section \ref{preliminary} we fix our notations
for Carnot groups and for Harnack inequality  in this setting; in Section
\ref{fractional} we collect some more or less known results on fractional
powers of sub-Laplacian in Carnot groups and we prove different representation
theorems. Finally, in Section \ref{main} we prove our main results.

\section{Preliminary results}\label{preliminary}

A connected and simply connected
 Lie group $(\G,\cdot)$  (in general non-com\-mu\-ta\-tive) is said a {\it Carnot group  of
step $\kappa$} if its Lie algebra
${\mathfrak{g}}$  admits a {\it step $\kappa$ stratification}, i.e.
there exist linear subspaces $V_1,...,V_\kappa$ such that
\begin{equation}\label{stratificazione}
{\mathfrak{g}}=V_1\oplus...\oplus V_\kappa,\quad [V_1,V_i]=V_{i+1},\quad
V_\kappa\neq\{0\},\quad V_i=\{0\}{\textrm{ if }} i>\kappa,
\end{equation}
where $[V_1,V_i]$ is the subspace of ${\mathfrak{g}}$ generated by
the commutators $[X,Y]$ with $X\in V_1$ and $Y\in V_i$. 
The first layer $V_1$, the so-called 
 {\sl horizontal layer},  plays a
key role in the theory, since it generates  $\mathfrak g$ by commutation. 

For a general introduction to Carnot groups from the point of view of the
present paper, we refer, e.g., to \cite{BLU}, \cite{folland_stein} and \cite{stein}.

Set
$m_i=\dim(V_i)$, for $i=1,\dots,\kappa$ and $h_i=m_1+\dots +m_i$, so that $h_\kappa=n$. For sake of simplicity, we write also
$m:=m_1$.
We denote by $Q$ the {\sl homogeneous dimension} of $\G$, i.e. we set
 $$
 Q:=\sum_{i=1}^{\kappa} i \dim(V_i).
 $$
 
If $e$ is the unit element of $(\G,\cdot)$, we remind that the map $X\to X(e)$,
that associate with a left-invariant vector field $X$ its value at $e$, is an
isomorphism from $\mathfrak g$ to $T\G_e$, in turn identified with $\rn n$.
From now on, we shall use systematically these identications. Thus,
the horizontal layer defines, by left translation, a fiber bundle $H\G$ over $\G$
(the {\sl horizontal bundle}). Its sections are the {\sl horizontal vector fields}.

We choose now a basis $e_1,\dots,e_n$ of
$\rn n$ adapted to the stratification of $\mathfrak g$, i.e. such that
$$e_{h_{j-1}+1},\dots,e_{h_j}\;\text {is a basis of}\; V_j\;\text{
for each}\; j=1,\dots, \kappa.$$
Then, we denote by $\scal{\cdot}{\cdot}$ the scalar product in $\mathfrak g$ making 
the adapted basis $\{e_1,\dots,e_n\}$ orthonormal.
Moreover, let $X=\{X_1,\dots,X_{n}\}$ be the family
of left invariant vector fields such 
that 
$X_i(e)=e_i$, $i=1,\dots,n$. Clearly, $X$ is othonormal with respect
to $\scal{\cdot}{\cdot}$.

A Carnot group $\G$ can be always identified, through exponential coordinates,
 with the Euclidean space $(\rn n, \cdot)$,
where $n$ is the dimension of ${\mathfrak{g}}$, endowed with a suitable
group operation. The explicit
expression of the group operation $\cdot$ is determined by the
Campbell-Hausdorff formula.

For
any $x\in\G$, the {\it (left) translation} $\tau_x:\G\to\G$ is defined
as $$ z\mapsto\tau_x z:=x\cdot z. $$ For any $\lambda >0$, the
{\it dilation} $\delta_\lambda:\G\to\G$, is defined as
\begin{equation}\label{dilatazioni}
\delta_\lambda(x_1,...,x_n)=
(\lambda^{d_1}x_1,...,\lambda^{d_n}x_n),
\end{equation} where $d_i\in\N$ is called {\it homogeneity of
the variable} $x_i$ in
$\G$ (see \cite{folland_stein} Chapter 1) and is defined as
\begin{equation}\label{omogeneita2}
d_j=i \quad\text {whenever}\; h_{i-1}+1\leq j\leq h_{i},
\end{equation}
hence $1=d_1=...=d_{m_1}<
d_{{m_1}+1}=2\leq...\leq d_n=\kappa.$

Through this paper, by $\G$-homogeneity we mean homogeneity with respect
to group dilations $\delta_\lambda$ (see  again \cite{folland_stein} Chapter 1).

 The Haar measure of $\G=(\rn n,\cdot)$ is the Lebesgue measure
in $\rn n$. 
If $A\subset \G$ is $ L$-measurable, we
write $|A|$ to denote its Lebesgue measure.
Moreover, if $m\ge 0$, we denote by $\mathcal
H^m$ the $m$-dimensional Hausdorff measure obtained from the
Euclidean distance in $\R^n\simeq \G.$


The following result is contained in  \cite{folland_stein}, Proposition 1.26.
\begin{proposition}\label{campi omogenei0}
If $j=1,\dots,m$, the vector fields $X_j$ have polynomial coefficients and have the
form
\begin{equation}\label{campi omogenei}
X_j(x)=\partial_j+\sum_{d_k>1}^n p_{j,k}(x)\partial_k, \quad
\end{equation} where
the $p_{j,k}$ are $\G$-homogeneous polynomials of degree $d_k-1$
for $d_k > 1$.
\end{proposition}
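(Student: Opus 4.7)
The plan is to combine exponential coordinates with the interaction between left-invariance and the group dilations $\delta_\lambda$. Because $\mathfrak g$ is nilpotent of step $\kappa$, the exponential map is a global diffeomorphism and, by Baker--Campbell--Hausdorff, the product $(x,y)\mapsto x\cdot y$ on $\rn n\simeq\G$ is given by a polynomial map whose dependence on $y$ is linear near $y=0$. Hence
$$X_j f(x) = \left.\frac{d}{dt}\right|_{t=0} f(x\cdot\exp(t\,e_j))$$
immediately exhibits coefficients $p_{j,k}(x)$ that are polynomials in $x$, with $p_{j,k}(0)=\delta_{jk}$ coming from the initial condition $X_j(e)=e_j$.

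The structural input used to pin down the homogeneity is that each $\delta_\lambda$ is a group automorphism. Indeed, the stratification condition $[V_i,V_j]\subset V_{i+j}$ means that the linear map on $\mathfrak g$ which multiplies $V_i$ by $\lambda^i$ respects brackets, hence is a Lie algebra automorphism; transporting it through $\exp$ produces precisely the coordinate dilation \eqref{dilatazioni}, which is therefore a group automorphism. Because pushforward by a group automorphism preserves left-invariance, and because $d_j=1$ for $j\le m$ gives $D(\delta_\lambda)_e(e_j)=\lambda e_j$, uniqueness of the left-invariant extension forces
$$(\delta_\lambda)_\ast X_j = \lambda\, X_j.$$

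Translating this identity into coordinates, where $(\delta_\lambda)_\ast \partial_k = \lambda^{d_k}\partial_k$ at corresponding points, yields
$$\lambda^{d_k}\, p_{j,k}(\delta_{1/\lambda} y) = \lambda\, p_{j,k}(y),$$
so $p_{j,k}$ is $\G$-homogeneous of degree $d_k-1$. When $d_k=1$ the polynomial $p_{j,k}$ is therefore constant, and the initial condition forces $p_{j,k}=\delta_{jk}$; this kills all off-diagonal horizontal terms and reduces the sum to indices $k$ with $d_k>1$, as claimed. When $d_k>1$ the polynomial has positive $\G$-degree $d_k-1$, which is exactly what the identity above encodes.

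The main obstacle is not computational but conceptual: establishing cleanly that the coordinate dilations \eqref{dilatazioni} really are group homomorphisms. This amounts to verifying from BCH that any iterated bracket of elements drawn from $V_{i_1},\dots,V_{i_r}$ lies in $V_{i_1+\cdots+i_r}$, so that weights add correctly under $\delta_\lambda$; once this structural fact is in hand, the polynomiality and homogeneity of the $p_{j,k}$ follow from the one-line calculation above.
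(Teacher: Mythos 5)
Your argument is correct. The paper itself offers no proof of this proposition --- it simply cites Folland--Stein, Proposition 1.26 --- and what you have written is essentially the standard argument found there: polynomiality of the group law via Baker--Campbell--Hausdorff, the fact that $[V_i,V_j]\subset V_{i+j}$ (an easy Jacobi-identity induction from $[V_1,V_i]=V_{i+1}$) makes $\delta_\lambda$ a group automorphism, and then left-invariance plus $D(\delta_\lambda)_e e_j=\lambda e_j$ forces $(\delta_\lambda)_\ast X_j=\lambda X_j$, which in coordinates is exactly the homogeneity relation $p_{j,k}(\delta_\lambda x)=\lambda^{d_k-1}p_{j,k}(x)$; the $d_k=1$ case then degenerates to constants pinned down by $X_j(e)=e_j$. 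The only imprecision worth flagging is the phrase that the product's ``dependence on $y$ is linear near $y=0$'': the group law is polynomial, not linear, in $y$; what you actually use is only that the first-order coefficient in $t$ of $f(x\cdot\exp(te_j))$ is polynomial in $x$, which is what BCH gives. This does not affect the validity of the proof.
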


Once a basis  $X_1,\dots,X_{m}$ of the horizontal layer is fixed, we
define,
for any function $f:\G\to \R$ for which the partial derivatives
$X_jf$ exist, the horizontal gradient of $f$, denoted by
$\nabla_{\G}f$, as the horizontal section
\begin{equation*}
\nabla_{\G}f:=\sum_{i=1}^{m}(X_if)X_i,
\end{equation*}
whose coordinates are $(X_1f,...,X_{m}f)$. Moreover, if
$\phi=(\phi_1,\dots,\phi_{m})$ is an horizontal section such
that $X_j\phi_j\in L^1_{\rm loc}(\G)$ for $j=1,\dots,m$, we define $\divg
\phi$ as the real valued function
\begin{equation*}
\divg(\phi):=-\sum_{j=1}^{m}X_j^*\phi_j=\sum_{j=1}^{m}X_j\phi_j.
\end{equation*}

Following \cite{folland_stein},
we also adopt the following multi-index notation for higher-order derivatives. If $I =
(i_1,\dots,i_{n})$ is a multi--index, we set  
$X^I=X_1^{i_1}\cdots
X_{n}^{i_{n}}$. By the Poincar\'e--Birkhoff--Witt theorem
(see, e.g. \cite{bourbaki}, I.2.7), the differential operators $X^I$ form a basis for the algebra of left invariant
differential operators in $\G$. Furthermore, we set 
$|I|:=i_1+\cdots +i_{n}$ the order of the differential operator
$X^I$, and   $d(I):=d_1i_1+\cdots +d_ni_{n}$ its degree of $\G$-homogeneity
with respect to group dilations.

 Let $X_1,\dots,X_m$ be a basis of the first layer of $\mathfrak g$, we denote by $\mc L$ the
associated positive sub-Laplacian
$$
\mc L:= -\sum_{j=1}^m X_j^2.
$$
It is easy to see that
$$
\mc Lu = - \divg (\nabla_\G u).
$$
In addition, $\mc L$ is left-invariant, i.e. for any $x\in\G$, we have
$$\mc L (u\circ \tau_x) = (\mc Lu)\circ \tau_x.$$

%
Following e.g. \cite{folland_stein}, we can define a group
convolution in $\G$: if, for instance, $f\in\mc D(\G)$ and
$g\in L^1_{\mathrm{loc}}(\G)$, we set
\begin{equation}\label{group convolution}
f\ast g(x):=\int f(y)g(y^{-1}x)\,dy\quad\mbox{for $x\in \G$}.
\end{equation}
We remind that, if (say) $g$ is a smooth function and $L$
is a left invariant differential operator, then
$
L(f\ast g)= f\ast Lg.
$
We remind also that the convolution is again well defined
when $f,g\in\mc D'(\G)$, provided at least one of them
has compact support (as customary, we denote by
$\mc E'(\G)$ the class of compactly supported distributions
in $\G$ identified with $\rn {n}$).

\medskip

If $E\subset\G$ is a mesurable set, a notion of $\G$-perimeter measure
$|\partial E|_\G$ has been introduced in \cite{GN}. We refer to \cite{GN},
\cite{FSSC_houston}, \cite{FSSC_CAG}, \cite{FSSC_step2} for a detailed presentation. For
our needs, we restrict ourselves to remind that, if $E$ has locally finite $\G$-perimeter
(is a $\G$-Caccioppoli set),
then $|\partial E|_\G$ is a Radon measure in $\G$, invariant under
group translations and $\G$-homogeneous of degree $Q-1$. Moreover, the following
representation theorem holds (see  \cite{capdangar}).

\begin{proposition}\label{perimetro regolare}
If $E$ is a $\G$-Caccioppoli set with Euclidean ${\mathbf C}^1$
boundary, then there is an explicit representation of the
$\G$-perimeter in terms of the Euclidean $(n-1)$-dimensional
Hausdorff measure $\mathcal H^{n-1}$
\begin{equation*}
|\partial E|_\G(\Omega)=\int_{\partial
E\cap\Omega}\bigg(\sum_{j=1}^{m_1}\langle
X_j,n\rangle_{\Rn}^2\bigg)^{1/2}d{\mathcal {H}}^{n-1},
\end{equation*}
where $n=n(x)$ is the Euclidean unit outward normal to $\partial
E$.
\end{proposition}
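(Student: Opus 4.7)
The plan is to unfold the variational definition of $|\partial E|_\G$ and reduce it to the classical Euclidean divergence theorem, exploiting the fact that horizontal left-invariant vector fields on a Carnot group are Euclidean divergence-free. Recall that, by definition,
$$
|\partial E|_\G(\Omega) = \sup\Big\{ \int_E \divg \phi \, dx : \phi \in \mc D(\Omega, H\G),\ |\phi(x)| \le 1 \text{ for every } x\in\Omega\Big\},
$$
where the horizontal section $\phi$ is written as $\phi = \sum_{j=1}^{m} \phi_j X_j$ and $|\phi| = (\sum_j \phi_j^2)^{1/2}$. The first step is to rewrite $\int_E\divg\phi\,dx$ as a boundary integral.

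To this end, consider $\phi$ as a genuine vector field on $\rn n$, namely $\tilde\phi(x) := \sum_{j=1}^m \phi_j(x)\, X_j(x) \in T\G_x \simeq \rn n$. From Proposition \ref{campi omogenei0}, each $X_j$ has the form $\partial_j + \sum_{d_k > 1} p_{j,k}(x)\partial_k$ where $p_{j,k}$ is $\G$-homogeneous of positive degree. A direct (and well-known) computation, or alternatively left-invariance together with the fact that Lebesgue measure is Haar on $\G$, yields that each $X_j$ is divergence-free in the Euclidean sense. Consequently
$$
\mathrm{div}_{\e}\tilde\phi = \sum_{j=1}^{m}\bigl(\nabla_{\e}\phi_j\cdot X_j + \phi_j\,\mathrm{div}_{\e}X_j\bigr) = \sum_{j=1}^{m} X_j\phi_j = \divg\phi.
$$
Since $\partial E$ is Euclidean $\mbf C^1$ and $\tilde\phi$ is $\mbf C^1_c(\Omega)$, the classical divergence theorem in $\rn n$ gives
$$
\int_E \divg \phi\, dx = \int_{\partial E\cap\Omega}\escal{\tilde\phi}{n}\, d\mc H^{n-1} = \int_{\partial E\cap\Omega}\sum_{j=1}^{m}\phi_j\,\escal{X_j}{n}\, d\mc H^{n-1}.
$$

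It remains to take the supremum over admissible $\phi$. For fixed $x\in\partial E$, the Cauchy--Schwarz inequality, applied pointwise to the $\rn m$-vectors $(\phi_j(x))_j$ and $(\escal{X_j}{n(x)})_j$ subject to $|\phi(x)|\le 1$, is saturated by the choice
$$
\phi_j^*(x) = \frac{\escal{X_j}{n(x)}}{\bigl(\sum_{k=1}^{m}\escal{X_k}{n(x)}^2\bigr)^{1/2}}, \qquad j=1,\dots, m,
$$
leading to the expected upper bound
$$
|\partial E|_\G(\Omega) \le \int_{\partial E\cap\Omega}\Big(\sum_{j=1}^{m}\escal{X_j}{n}^2\Big)^{1/2} d\mc H^{n-1}.
$$
The main technical point, and the step I expect to demand the most care, is the reverse inequality: the field $\phi^*$ is defined only on $\partial E$ (and need not be smooth or compactly supported in $\Omega$), so one has to realize the pointwise supremum by an approximating sequence of genuine test sections $\phi^{(k)}\in\mc D(\Omega, H\G)$ with $|\phi^{(k)}|\le 1$. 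This is a standard density/cut-off procedure: one extends $\phi^*$ to a neighborhood of $\partial E$ using the $\mbf C^1$ regularity of $\partial E$ (e.g.\ by composing with the nearest-point projection onto $\partial E$), multiplies by a smooth cut-off supported in a shrinking tubular neighborhood of $\partial E\cap\Omega$, and mollifies, checking that the constraint $|\phi^{(k)}|\le 1$ is preserved and that the boundary integrals converge by dominated convergence. Combining the two inequalities yields the claimed formula.
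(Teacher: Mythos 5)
The paper offers no proof of this proposition at all: it is quoted as a known representation theorem with a citation to \cite{capdangar}, so there is nothing internal to compare against. Your argument is the standard proof from that literature and is essentially correct: the key observations --- that each $X_j$ is Euclidean divergence-free (because $p_{j,k}$ does not depend on $x_k$, or equivalently by left-invariance of Lebesgue measure), so that $\divg\phi=\mathrm{div}_{\mathrm{Euc}}\tilde\phi$, followed by the classical divergence theorem and a pointwise Cauchy--Schwarz optimization --- are exactly the right ones.

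One technical point deserves more care than you give it: the optimal field $\phi^*$ is undefined at \emph{characteristic points} of $\partial E$, i.e.\ where $\sum_k\escal{X_k}{n}^2=0$, and these may well be present (they always are, for instance, for smooth compact hypersurfaces in the Heisenberg group). Your extension/cut-off/mollification scheme should therefore start not from $\phi^*$ itself but from the regularized field $\phi^\eps_j=\escal{X_j}{n}\bigl(\eps+(\sum_k\escal{X_k}{n}^2)^{1/2}\bigr)^{-1}$, which is continuous on all of $\partial E$, satisfies $|\phi^\eps|\le 1$, and whose pairing with $(\escal{X_j}{n})_j$ increases to the integrand as $\eps\to 0$; monotone (or dominated) convergence then closes the lower bound. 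With that adjustment the proof is complete.
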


We have also
\begin{proposition}\label{divergence}
If $E$ is a regular bounded open set with Euclidean ${\mathbf C}^1$
boundary and $\phi$ is a horizontal vector field,
continuously differentiable on $ \overline{\Omega} $, then
$$
\int_E \divg \phi\, dx = \int_{\partial E} \scal{\phi}{n_\G} d |\partial E|_\G,
$$
where $n_\G(x)$ is the intrinsic horizontal outward normal to $\partial E$,
given by the (normalized) projection of $n(x)$ on the fiber $H\G_x$ of
the horizontal fibre bundle $H\G$.
\end{proposition}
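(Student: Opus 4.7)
The plan is to reduce the statement to the classical Euclidean divergence theorem applied componentwise, using the explicit form of the horizontal vector fields given by Proposition~\ref{campi omogenei0} together with the representation of the $\G$-perimeter from Proposition~\ref{perimetro regolare}.

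First I would write $\phi=\sum_{j=1}^{m}\phi_j X_j$, so that $\divg\phi=\sum_{j=1}^m X_j\phi_j$, and handle one index $j$ at a time. Identifying $X_j$ with the Euclidean vector field $\vec X_j$ whose Cartesian components are the coefficients of the partial derivatives in \eqref{campi omogenei}, the Leibniz rule yields
$$
\mathrm{div}_{\mathrm{Euc}}(\phi_j\vec X_j)\;=\;X_j\phi_j\,+\,\phi_j\,\mathrm{div}_{\mathrm{Euc}}\vec X_j .
$$
The key observation is that $\mathrm{div}_{\mathrm{Euc}}\vec X_j\equiv 0$: in the adapted coordinates furnished by the Campbell--Hausdorff formula, each coefficient $p_{j,k}$ in \eqref{campi omogenei} depends only on variables $x_\ell$ with $d_\ell<d_k$, so $\partial_k p_{j,k}=0$; equivalently, left-invariance of $X_j$ together with the unimodularity of $\G$ forces the formal Euclidean adjoint to satisfy $X_j^{\ast}=-X_j$. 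Consequently $X_j\phi_j=\mathrm{div}_{\mathrm{Euc}}(\phi_j\vec X_j)$, and the classical divergence theorem, which applies because $\partial E$ is Euclidean $C^1$ and $\phi_j\vec X_j\in C^1(\overline E)$, gives
$$
\int_E X_j\phi_j\,dx\;=\;\int_{\partial E}\phi_j\,\escal{\vec X_j}{n}\,d\mathcal H^{n-1}.
$$

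It remains to sum over $j$ and to identify the resulting boundary integrand with $\scal{\phi}{n_\G}\,d|\partial E|_\G$. By Proposition~\ref{perimetro regolare} the Radon--Nikodym density of $|\partial E|_\G$ with respect to $\mathcal H^{n-1}\res\partial E$ is $\bigl(\sum_{i=1}^{m}\escal{X_i}{n}^2\bigr)^{1/2}$; and since $n_\G$ is by definition the normalized projection of $n$ on $H\G_x$, of unit length in the left-invariant metric that makes $X_1,\dots,X_m$ orthonormal, its $j$-th component in that horizontal frame is $\escal{X_j}{n}\big/\bigl(\sum_i\escal{X_i}{n}^2\bigr)^{1/2}$. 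The product $\scal{\phi}{n_\G}\,d|\partial E|_\G$ therefore equals $\sum_j\phi_j\,\escal{X_j}{n}\,d\mathcal H^{n-1}$, matching exactly the sum over $j$ of the previous display.

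The only step in the argument that genuinely uses the stratified structure of $\G$ is the identity $\partial_k p_{j,k}\equiv 0$, equivalently $X_j^{\ast}=-X_j$; I expect this to be the main pressure point. However, it is a standard consequence of the nilpotent and unimodular nature of Carnot groups (the Haar measure being Lebesgue measure in exponential coordinates), so it may be quoted without further derivation. Everything else is book-keeping in a fixed horizontal orthonormal frame.
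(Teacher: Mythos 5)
Your argument is correct, and in fact the paper offers no proof of this proposition at all: it is stated as a known fact, with the surrounding material quoted from the literature (the representation of $|\partial E|_\G$ is attributed to \cite{capdangar}). Your route is the standard one: the only substantive point is that $\mathrm{div}_{\mathrm{Euc}}\vec X_j=0$, which holds exactly as you say because $p_{j,k}$ is $\G$-homogeneous of degree $d_k-1<d_k$ and hence cannot contain the variable $x_k$ (indeed, by Proposition \ref{campi omogenei0} it depends only on variables of strictly lower layer), so that $X_j\phi_j=\mathrm{div}_{\mathrm{Euc}}(\phi_j\vec X_j)$ and the Euclidean divergence theorem applies; the identification of the boundary term then follows from Proposition \ref{perimetro regolare} once one adopts, as you do, the standard convention that the components of $n_\G$ in the orthonormal horizontal frame are $\escal{X_j}{n}\big/\bigl(\sum_i\escal{X_i}{n}^2\bigr)^{1/2}$, which is precisely what the paper's phrase ``normalized projection of $n$ on $H\G_x$'' means here.
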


\begin{remark}
The definition of $n_\G$ is well done, since $H\G_x$ is transversal to
the tangent space to $E$ at $x$ for  $|\partial E|_\G$-a.e. $x\in\partial E$
(see \cite{magnani}).
\end{remark}

\medskip

\begin{definition}\label{distanza}{\bf (Carnot-Carath\'eodory
distance)}
An absolutely continuous curve $\gamma:[0,T]\to \G$ is a {\it
sub-unit curve} with respect to $X_1,\dots,X_{m}$ if it is an
{\it horizontal curve}, i.e. if there are real measurable
functions $c_1(s),\dots,c_{m}(s)$, $s\in [0,T]$ such that
$$\dot\gamma(s)=\sum\limits_{j=1}^{m}\,c_j(s) X_j(\gamma(s)),
\qquad \text{for a.e.}\;s\in [0,T],$$ and if, in addition,
 $$\sum_jc_j^2\le 1.$$

If $x,y\in\G$,
their Carnot-Carath\'eodory
distance (cc-distance) $d_c(x,y)$ is defined as follows:
$$
d_c(x,y)=\inf\left\{T>0:\;\text{there is a subunit curve}\;
\gamma\;\text{with}\; \gamma(0)=x,\,\gamma(T)=y\right\}. $$
\end{definition}
The set of subunit curves joining $x$ and $y$ is not empty, by
Chow's theorem, since by (\ref{stratificazione}), the rank of the
Lie algebra generated by $X_1,\dots,X_{m}$ is $n$; hence $d_c$
is a distance on $\G$ inducing the same topology as the standard
Euclidean distance. We shall denote  $B_c(x,r)$
 the open  balls associated with $d_c$.   
The cc-distance is well behaved with respect to left
translations and dilations, that is
\begin{equation}\label{well behaved}
\begin{split}d_c(z\cdot x,z\cdot y)=d_c(x,y)\quad &,\quad
d_c(\delta_\lambda(x),\delta_\lambda(y))=\lambda d_c(x,y)
\end{split}\end{equation}
for
$x,y,z\in\G$ and $\lambda>0$.

We have also
 \begin{equation*}\label{ball measures}
|B_c(x,r)| = r^Q |B_c(0,1)| \quad\mbox{and}\quad |\partial B_c(x,r)|_\G (\G)
= r^{Q-1} |\partial B_c(0,1)|_\G (\G).
\end{equation*}

Denote by $Y$ the vector field $\frac{\partial}{\partial y}$ in $\hat\G:=\G\times\R$. The Lie group $\hat\G$
 is a Carnot group; its Lie algebra $\hat{\mathfrak g}$ admits the
 stratification
 \begin{equation}\label{stratificazione GxR}
\hat{\mathfrak{g}}=\hat V_1\oplus V_2\oplus\cdots\oplus V_\kappa,
\end{equation}
where $\hat V_1 =\mathrm{span}\,\{Y,V_1\}$. Since the  basis $\{X_1,
\dots,X_m\}$ of $V_1$ has been already fixed once and for all, the associated 
basis for $\hat{V_1}$ will be $\{X_1,\dots, X_m,Y\}$.

The following statement follows trivially from the definition of Carnot-Carath\'eodory
distance, keeping into account that the coefficients of $X_1,\cdots,X_m$ in $\hat\G$
are independent of $y$.

\begin{lemma}\label{balls} Denote by $\hat B_c((x,y),r)$ a Carnot-Carath\'eodory ball in
$\hat \G$ centered at the point $(x,y)\in\hat \G$ and $B_c(x,r)$ the Carnot-Carath\'eodory ball in
$\G$ centered at the point $x\in \G$. Then
$$
\hat B_c((x,0),r)\cap \{y=0\} = B_c(x,r)\times\{0\}.
$$
Moreover, if $(x,y) \in K$, where $K\subset \G\times\R$ is a
compact set, and $r\le r_0$ there exist $\sigma_1, \sigma_2>0$ (independent of $r$ and $(x,y)$) such that
$$
\hat B_c((x,y),\sigma_1r) \subset B_c(x,r)\times (y-r,y+r) \subset  \hat B_c((x,y),\sigma_2r).
$$
\end{lemma}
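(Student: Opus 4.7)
The plan is to reduce both assertions to a direct comparison of the Carnot--Carath\'eodory distances on $\G$ and on $\hat\G$, exploiting two structural features of the product Carnot group: the horizontal fields $X_1,\dots,X_m$ lift to $\hat\G$ with coefficients independent of the new variable $y$ (by Proposition~\ref{campi omogenei0}), and consequently $[X_j,Y]=0$ for every $j=1,\dots,m$. A curve in $\hat\G$ is therefore subunit if and only if its controls $(c_0,c_1,\dots,c_m)$ attached to $(Y,X_1,\dots,X_m)$ satisfy $c_0^2+\sum_{j=1}^m c_j^2\le 1$, and the projection onto $\G$ of any subunit $\hat\G$-curve is automatically a subunit $\G$-curve with the same time parameter, since $\sum_j c_j^2\le 1-c_0^2\le 1$.

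The first assertion would then follow from the identity $d_c^{\hat\G}((x,0),(x',0))=d_c^\G(x,x')$. The inequality $\le$ comes from lifting any subunit $\G$-curve $\gamma$ joining $x$ to $x'$ to the curve $t\mapsto(\gamma(t),0)$ in $\hat\G$; the opposite inequality comes from the projection argument above applied to any subunit $\hat\G$-curve joining $(x,0)$ to $(x',0)$. For the second assertion, I would prove the inclusion with $\sigma_1=1$ by applying the same projection, together with the bound $|y'-y|=\bigl|\int_0^T c_0(s)\,ds\bigr|\le T$, to any subunit $\hat\G$-curve of length $T<r$ joining $(x,y)$ to $(x',y')$. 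The inclusion with $\sigma_2=2$ would come from concatenating a lift of a subunit $\G$-curve from $x$ to $x'$ at constant height $y$ (length $<r$) with the integral curve of $Y$ from $(x',y)$ to $(x',y')$ (length $|y'-y|<r$); this concatenation is subunit in $\hat\G$ and of total length $<2r$.

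I do not foresee any substantial obstacle: the constants $\sigma_1=1$ and $\sigma_2=2$ turn out to be universal, so the hypotheses on the compact set $K$ and on $r\le r_0$ are in fact not needed in this direct argument. The only point that requires any attention is verifying that lifts and projections genuinely preserve the subunit condition, which is precisely where the commutation $[X_j,Y]=0$ and the $y$-independence of the coefficients $p_{j,k}$ enter.
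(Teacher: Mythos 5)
Your argument is correct and is precisely the elaboration of the paper's one-line justification, which simply remarks that the claim ``follows trivially from the definition of Carnot--Carath\'eodory distance'' once one notes that the coefficients of $X_1,\dots,X_m$ in $\hat\G$ are independent of $y$ --- i.e.\ exactly the projection/lifting of subunit curves that you carry out. Your further observation that $\sigma_1=1$ and $\sigma_2=2$ work universally, so that the compactness of $K$ and the restriction $r\le r_0$ are not actually needed, is also correct.
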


%

\begin{definition}[see \cite{muckenhoupt}, \cite{calderon}] A function $\omega\in L^1_{\mathrm{loc}}(\G)$
is said to be a $A_2$-weight with respect to the cc-metric of $\G$ if
$$
\sup_{x\in\G,\,r>0} \ave_{B_c(x,r)}\omega(y)\,dy \cdot  \ave_{B_c(x,r)}\omega(y)^{-1}\,dy < \infty.
$$

The following remark will be crucial in Section \ref{main}.
\begin{remark}\label{A2 product} By Lemma \ref{balls}, the function $\omega(x,y)=|y|^a$ is
a $A_2$-weight with respect to the cc-metric of $\G\times\R$ if and only if $-1<a<1$.

\end{remark}

\end{definition}
The following result, that is the counterpart in the sub-elliptic framework of the Euclidean setting (see e.g. \cite{FKS} and \cite{sta}), can be found in \cite{lu}.
This idea goes back (at least for the so-called ``Grushin type'' vector fields) to \cite{FL}
and \cite{FS_ASNP}. Basically, this is possible thanks to weighted Sobolev-Poincar\'e inequalities
in Carnot groups.

For further results concerning the boundary Harnack principle in Carnot groups
we refer to \cite{FF_CPDE}.

\begin{theorem}\label{FKS} Let $\G$ be a Carnot group, and let $\Omega\subset\G$ be an open set. Let now
$\omega\in L^1_{\mathrm{loc}}(\G)$ be a $A_2$-weight with respect to the Carnot-Carath\'eodory metric $d_c$ of $\G$.
Then, if $u\in W^{1,2}_\G(\Omega, \omega dx)$ is a  weak solution of
\begin{equation}\label{eq A2}
\mathrm{div }_\G \, (\omega\, \nabla_\G u) = 0,
\end{equation}
then $u$ is locally H\"older continuous in $\Omega$.
If, in addition, $u\ge 0$, then  there exist $C,b>0$ (independent of $u$) such that
the following invariant Harnack inequality holds:
$$
\sup_{B_c(x,r)} u \le C \inf_{B_c(x,r)} u 
$$
for any metric ball $B_c(x,r)$ such that $B_c(x,br)\subset \Omega$. 

Suppose now $\Omega$ satisfies the following local condition (S): for any $x_0\in\partial\Omega$
there exist $r_0>0$ and $\alpha>0$ such that
$$
|B_c(x_0,r)\cap \Omega^c|\ge \alpha |B_c(x_0,r)|\quad\mbox{for $r<r_0$.}
$$
Then $u$ is locally H\"older continuous in $\overline{\Omega}$.

\end{theorem}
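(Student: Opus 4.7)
The plan is to implement the Moser--De Giorgi--Nash scheme, in the form adapted by Fabes, Kenig and Serapioni to degenerate elliptic equations with $A_2$ weights, transplanted into the sub-elliptic setting as in \cite{lu}. The key preliminary observation is that $(\G, d_c, \omega\,dx)$ is a space of homogeneous type: by \eqref{well behaved} and the identity $|B_c(x,r)| = r^Q |B_c(0,1)|$, the cc-balls are Lebesgue-doubling, and this doubling transfers to the measure $\omega\,dx$ precisely because $\omega \in A_2$. All subsequent estimates will be phrased in this metric-measure framework, with $\mathrm{div}_\G$ and $\nabla_\G$ playing the roles of the Euclidean divergence and gradient.

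The first main ingredient is a weighted sub-elliptic Sobolev--Poincar\'e inequality: there exist $\kappa > 1$ and $C>0$ such that for every ball $B = B_c(x,r)$ and every $f \in W^{1,2}_\G(B, \omega\,dx)$,
$$
\left(\frac{1}{\omega(B)}\int_B |f-f_B|^{2\kappa}\,\omega\,dx\right)^{1/(2\kappa)} \le C r \left(\frac{1}{\omega(B)}\int_B |\nabla_\G f|^2\,\omega\,dx\right)^{1/2}.
$$
This would be obtained by coupling Jerison's unweighted sub-elliptic Poincar\'e inequality for H\"ormander vector fields (which crucially uses the stratification \eqref{stratificazione} through Chow's theorem) with an extrapolation argument: one represents $f - f_B$ as a fractional integral of $|\nabla_\G f|$ along sub-unit curves, and then invokes the boundedness of such fractional integrals between weighted $L^p$ spaces on a homogeneous space, a boundedness that depends solely on the $A_2$ property of $\omega$.

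With doubling and the weighted Sobolev--Poincar\'e inequality in hand, the Moser iteration runs along classical lines. For a nonnegative weak solution $u$ of \eqref{eq A2} and any exponent $p \neq 1/2$, inserting the test function $\eta^2 u^{2p-1}$ (with $\eta$ a smooth cutoff supported in a cc-ball) yields the Caccioppoli-type estimate
$$
\int \eta^2 |\nabla_\G u^p|^2\,\omega\,dx \le C \int |\nabla_\G \eta|^2 u^{2p}\,\omega\,dx.
$$
Combining this with the Sobolev--Poincar\'e inequality and iterating over a nested family of cc-balls produces $\sup_{B_c(x,r/2)} u \le C\bigl(\omega(B)^{-1}\int_B u^2\,\omega\,dx\bigr)^{1/2}$; the analogous iteration with negative exponents gives the corresponding lower bound for $\inf u$ in terms of a negative $L^q$-norm of $u$ for some small $q>0$. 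The bridge between the positive and negative exponents is supplied by the logarithmic Caccioppoli estimate obtained from the test function $\eta^2 u^{-1}$, which shows that $\log u \in \mathrm{BMO}$ with respect to $d\omega$; the John--Nirenberg inequality on the space of homogeneous type $(\G, d_c, \omega\,dx)$ then closes the gap and produces the invariant Harnack inequality. Local H\"older continuity follows from Harnack via the standard oscillation-decay argument, and the boundary H\"older continuity under the condition (S) is obtained by testing \eqref{eq A2} with cutoffs vanishing on $\Omega^c$ and exploiting the measure-theoretic thickness provided by (S).

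The principal obstacle is the weighted sub-elliptic Sobolev--Poincar\'e inequality itself. In the Euclidean setting the classical representation of $f - f_B$ as a Riesz-type potential of the gradient makes the $A_2$ extrapolation essentially automatic, whereas here there is no explicit expression for $d_c$, and one must construct surrogate representation formulas along sub-unit paths combined with careful covering and chaining arguments that interact nontrivially with the $A_2$ condition. Once this inequality is secured, the remaining steps are a technical but conceptually routine transcription of the Fabes--Kenig--Serapioni argument to the homogeneous space $(\G, d_c, \omega\,dx)$.
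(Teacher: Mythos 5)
Your outline is correct and coincides with the route the paper itself takes: the paper does not prove this theorem but cites it from \cite{lu} (following \cite{FKS}, \cite{FL}, \cite{FS_ASNP}), explicitly noting that the result rests on weighted Sobolev--Poincar\'e inequalities in Carnot groups — exactly the ingredient you single out as the crux before running the Fabes--Kenig--Serapioni/Moser iteration on the homogeneous space $(\G, d_c, \omega\,dx)$. Nothing in your sketch deviates from that scheme, so no further comparison is needed.
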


\section{Fractional powers of subelliptic Laplacians}\label{fractional}
%
%
%
\begin{definition} Let $\alpha\in\,\C$. We call $K_{\alpha}$ a kernel of type $\alpha$ (according to Folland) a distribution which is smooth away from 0 and 
$\G$-homogeneus of degree $\alpha-Q.$
\end{definition}
\begin{remark}\label{eqnalfa}Let $K_{\alpha}$ be a positive kernel of type ${\alpha}$; 
then there exist  $m,M\,\in\,\R$, with $0<m\leq\,M<\infty$, such that
\begin{equation*}
m\,d(y,0)^{\alpha-Q}<K_{\alpha}(y)<Md(y,0)^{\alpha-Q},
\end{equation*}
for any $y\,\in\,\G.$
\end{remark}


  \begin{proposition}\label{pdalpha} Suppose $0<\beta<Q$. Denote by $h=h(t,x)$ the fundamental
solution of $\mc L+\partial/\partial t$ (see \cite{folland}, Proposition 3.3). Then the integral
$$
R_\beta(x) =\frac{1}{\Gamma(\beta/2)}
\int_0^{\infty}t^{\frac{\beta}{2}-1}h(t,x)\, dt
$$
converges absolutely for $x\neq 0$. In addition, $R_\beta$ is a kernel of type $\beta$.

Moreover
\begin{itemize}
\item [i)] $R_2$ is the fundamental solution of $\mc L$;
\item[ii)] if $\alpha\in (0,2)$ and $u\in\mc D(\G)$, then
$$
\mc L^{\alpha/2} u = \mc Lu\ast R_{2-\alpha}.
$$
\item[iii)] 
the kernels $R_{\alpha}$ admit the following convolution rule: if $\alpha>0,$ $\beta>0$ and $x\neq 0$, then
$$R_{\alpha+\beta}(x)=R_{\alpha}(x)\ast R_{\beta}(x).$$
\end{itemize}
 \end{proposition}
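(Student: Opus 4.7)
The plan rests on three tools: Gaussian-type upper bounds for the heat kernel $h(t,x)$ (from Folland and subsequent work), the scaling identity $h(\lambda^2 t,\delta_\lambda x)=\lambda^{-Q}h(t,x)$ (which follows from the $\G$-homogeneity of $\mc L$ of degree $2$ together with the uniqueness of the fundamental solution), and Balakrishnan's subordination formula $\mc L^{-s}=\Gamma(s)^{-1}\int_0^\infty t^{s-1}e^{-t\mc L}\,dt$, applied via the spectral resolution of $\mc L$ on $L^2(\G)$.

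For absolute convergence and the claim that $R_\beta$ is a kernel of type $\beta$: fix $x\neq 0$. The bound $h(t,x)\le C t^{-Q/2}\exp(-c\,d_c(x,0)^2/t)$ makes $t^{\beta/2-1}h(t,x)$ integrable on $(0,1]$ because the exponential factor dominates any negative power of $t$, while the global bound $h(t,x)\le Ct^{-Q/2}$ gives integrability on $[1,\infty)$ exactly under the hypothesis $\beta<Q$. The change of variable $t=\lambda^2 s$ and the scaling identity yield $R_\beta(\delta_\lambda x)=\lambda^{\beta-Q}R_\beta(x)$. Smoothness away from the origin follows by differentiating under the integral, since the same type of Gaussian estimates hold for every left-invariant derivative $X^I h(t,\cdot)$ on the set $\{d_c(x,0)\ge r_0>0\}$.

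For (i), I would test $R_2$ against $\phi\in\mc D(\G)$: by Fubini and $\mc L h(t,\cdot)=-\partial_t h(t,\cdot)$,
$$
\langle\mc L R_2,\phi\rangle=\int_0^\infty\!\int_\G h(t,x)\,\mc L\phi(x)\,dx\,dt=-\int_0^\infty\partial_t\!\left(\int_\G h(t,x)\phi(x)\,dx\right)dt=\phi(0),
$$
using that $h(t,\cdot)\to\delta_0$ in $\mc D'$ as $t\to 0^+$ and $h(t,\cdot)\to 0$ as $t\to\infty$ by the uniform bound. For (ii), Balakrishnan's formula combined with $e^{-t\mc L}u=u*h(t,\cdot)$ gives, for $u\in\mc D(\G)$, the identity $\mc L^{-(2-\alpha)/2}u=u*R_{2-\alpha}$; the left invariance of $\mc L$ (which commutes with right convolution) then yields $\mc L^{\alpha/2}u=\mc L\bigl(\mc L^{-(2-\alpha)/2}u\bigr)=(\mc L u)*R_{2-\alpha}$. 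For (iii), the semigroup identity $h(t+s,\cdot)=h(t,\cdot)*h(s,\cdot)$ and Fubini (justified by the convergence already established) give
$$
R_\alpha*R_\beta=\frac{1}{\Gamma(\alpha/2)\Gamma(\beta/2)}\int_0^\infty\!\!\int_0^\infty t^{\alpha/2-1}s^{\beta/2-1}h(t+s,\cdot)\,dt\,ds.
$$
The substitution $u=t+s$, $w=t/u$ and the Beta integral identity $B(\alpha/2,\beta/2)=\Gamma(\alpha/2)\Gamma(\beta/2)/\Gamma((\alpha+\beta)/2)$ then collapse this to $R_{\alpha+\beta}$.

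I expect the main obstacle to be the rigorous justification of the various interchanges (Fubini across a noncommutative convolution, differentiation under the integral for the smoothness of $R_\beta$, and the distributional limit $h(t,\cdot)\to\delta_0$ as $t\to 0^+$ in step (i)); all of these reduce to having quantitative Gaussian bounds for $h$ and its derivatives off the diagonal, which are standard for Carnot groups. A minor further check is that the Balakrishnan formula, a priori an $L^2$ identity, actually holds pointwise for $u\in\mc D(\G)$, which follows from hypoellipticity of $\mc L$ together with the rapid decay of $e^{-t\mc L}u$ in $t$.
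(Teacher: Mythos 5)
Your proof is correct, but it is genuinely more self-contained than the paper's, which simply states that ``these results are basically contained in Folland'' and only sketches (ii) via the identity $\mc L^{\alpha/2}u=\mc L^{(\alpha-2)/2}\mc Lu=\mc Lu\ast R_{2-\alpha}$. What you supply in addition --- Gaussian bounds together with the scaling $h(\lambda^2t,\delta_\lambda x)=\lambda^{-Q}h(t,x)$ for convergence and homogeneity, the heat-equation/Fubini computation for (i), and the Chapman--Kolmogorov plus Beta-integral computation for (iii) --- is essentially Folland's original argument, so the two routes coincide at bottom; yours has the merit of being checkable without the external reference. Two small points. First, in (ii) your parenthetical justification is off: for a left-invariant operator one has $\mc L(f\ast g)=f\ast\mc Lg$, not $(\mc Lf)\ast g$, so ``$\mc L$ commutes with right convolution'' is not the right reason; the correct (and easy) justification is the one the paper uses, namely that $\mc L$ and $\mc L^{-(2-\alpha)/2}$ commute on $\mc D(\G)$ by the spectral calculus, whence $\mc L\bigl(\mc L^{-(2-\alpha)/2}u\bigr)=\mc L^{-(2-\alpha)/2}(\mc Lu)=(\mc Lu)\ast R_{2-\alpha}$. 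Second, in (iii) your computation produces $R_{\alpha+\beta}$ as $\Gamma((\alpha+\beta)/2)^{-1}\int_0^\infty u^{(\alpha+\beta)/2-1}h(u,\cdot)\,du$, which converges only when $\alpha+\beta<Q$; that restriction is missing from the statement as printed (it is present in Folland's version), so it is worth flagging, but it is not a defect of your argument.
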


\begin{proof}
These results are basically contained in \cite{folland}. Let us sketch the proof of ii): by \cite{folland}, Theorem 3.15, iii), and Proposition 3.18, keeping in mind that $\mc D(\G)$ is contained in
the domain of all real powers of $\mc L$, we obtain
$$
\mc L^{\alpha/2} u=\mc L^{(\alpha-2)/2}\mc L u = \mc Lu\ast R_{2-\alpha}.
$$
\end{proof}

\begin{remark}\label{Rbeta} If $\beta<0$, $\beta\notin\{0,-2,-4,\cdots\}$, then again
$$
\widetilde R_\beta(x) =\frac{\frac{\beta}{2}}{\Gamma(\beta/2)}
\int_0^{\infty}t^{\frac{\beta}{2} -1}h(t,x)\, dt
$$
defines a smooth function in $\G\setminus\{0\}$, since $t\to h(t,x)$ vanishes of infinite order
as $t\to 0$ if $x\neq 0$. In addition, $\widetilde R_\beta$ is positive and $\G$-homogeneous of
degree $\beta-Q$. However, unlike $R_\beta$ for $0<\beta<Q$,
$\widetilde R_\beta$ is not a kernel of type $\beta$, since it
does not belong to $L^1_{\mathrm{loc}}(\G)$. Integrating by parts, it is easy to see also
that, if $0<\alpha<2$, then
$$
\mc L R_{2-\alpha} = \widetilde R_{-\alpha}
$$
for $x\neq 0$.

\end{remark}
%
%

\begin{definition} 
We set (we remind that $R_\beta>0$ for $0<\beta<Q$)
$$
\rho (x) = R_{2-\alpha}^{1/(2-\alpha-Q)}.
$$
It is easy to see that $\rho$ is an $\G$-homogeneous norm in $\G,$ smooth outside of the
origin. In addition, $d(x,y):=\rho(y^{-1}x)$ is a quasi-distance in $\G$.
In turn, $d$ is equivalent to
the Carnot-Carath\'eodory distance on $G$, as well as to any other $\G$-homogeneous left invariant
distance on $\G$.
\end{definition}

\begin{proposition} Denote by $B_\rho=B_\rho(x,r)$ the metric balls given by $\rho$. We have:
\begin{equation}\label{equiv distances}
m d_c(x,y)\le d(x,y) \le M d_c(x,y)\quad\mbox{for all $x,y\in\G$};
\end{equation}
\begin{equation}\label{volume ball}
m r^Q \le |B_\rho(x,r)| \le M r^Q;
\end{equation}
\begin{equation}\label{perimeter ball}
m r^{Q-1} \le \mc H^{Q-1}_\G(\partial B_\rho(x,r)) \le M  r^{Q-1}.
\end{equation}

\end{proposition}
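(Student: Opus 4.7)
The proof breaks into the three displayed estimates, which I would treat separately but using a common strategy: reduce to the unit ball at the origin via left-invariance and dilations, and then control that single object by compactness and the standing assumption that $\rho$ is smooth and $\G$-homogeneous away from $0$.

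\textbf{Step 1: equivalence of distances.} Both $d_c(\cdot,0)$ and $\rho$ are continuous, strictly positive on $\G\setminus\{e\}$, and $\G$-homogeneous of degree $1$ (the former by \eqref{well behaved}, the latter by the definition of $\rho$ together with the fact that $R_{2-\alpha}$ is a kernel of type $2-\alpha$). The CC-unit sphere $\{d_c(\cdot,0)=1\}$ is compact (topologies agree), so the ratio $\rho/d_c(\cdot,0)$ attains a positive minimum $m$ and finite maximum $M$ on it. By homogeneity this inequality extends to all of $\G$, giving $m\,d_c(x,0)\le\rho(x)\le M\,d_c(x,0)$. Finally, by left-invariance of both quantities, $m\,d_c(x,y)\le d(x,y)\le M\,d_c(x,y)$ follows at once.

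\textbf{Step 2: volume of the $\rho$-balls.} Using the left-invariance of the Haar (Lebesgue) measure and of $d$, we have $|B_\rho(x,r)|=|B_\rho(0,r)|$. Under the dilation $\delta_r$, the ball $B_\rho(0,1)$ is mapped bijectively onto $B_\rho(0,r)$ because $\rho\circ\delta_r=r\rho$; since the Jacobian of $\delta_r$ is $r^Q$ (by \eqref{omogeneita2}), we obtain the exact identity $|B_\rho(x,r)|=r^Q|B_\rho(0,1)|$. It only remains to check that $c:=|B_\rho(0,1)|$ is finite and strictly positive: from Step 1, $B_\rho(0,1)$ contains a CC-ball $B_c(0,1/M)$ and is contained in $B_c(0,1/m)$, both of which have finite positive Lebesgue measure. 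Taking $m=M=c$ yields \eqref{volume ball}.

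\textbf{Step 3: $\G$-perimeter of $\partial B_\rho$.} Since $\rho$ is smooth outside the origin and $\G$-homogeneous of degree $1$, the set $B_\rho(0,1)$ is a bounded open set with Euclidean $C^1$ boundary, hence by Proposition \ref{perimetro regolare} it is a $\G$-Caccioppoli set with finite $\G$-perimeter. The translation- and homogeneity-properties of $|\partial E|_\G$ (translation-invariance and $\G$-homogeneity of degree $Q-1$, as recalled just before Proposition \ref{perimetro regolare}) give the exact identity
\[
|\partial B_\rho(x,r)|_\G(\G)=r^{Q-1}\,|\partial B_\rho(0,1)|_\G(\G).
\]
It remains to observe that the number $c':=|\partial B_\rho(0,1)|_\G(\G)$ is strictly positive and finite: finiteness follows because $\partial B_\rho(0,1)$ is a compact Euclidean $C^1$ hypersurface, so the integral representation in Proposition \ref{perimetro regolare} gives a bound by a multiple of its Euclidean $\mathcal H^{n-1}$-measure; positivity follows because $\rho$ is smooth and has non-zero horizontal gradient on a non-negligible portion of $\partial B_\rho(0,1)$ (otherwise $\rho$ would be constant on a horizontal curve through a boundary point, contradicting $\G$-homogeneity). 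Setting $m=M=c'$ yields \eqref{perimeter ball}.

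The only step requiring genuine care is the positivity of $c'$ in Step 3; the easiest way to cement it, if one prefers to avoid the horizontal-gradient argument, is to note that $B_\rho(0,1)$ has positive $\G$-perimeter because a set of positive finite Lebesgue measure cannot have zero $\G$-perimeter (by the isoperimetric inequality in Carnot groups). The remaining computations are purely algebraic consequences of translation-invariance and $\G$-homogeneity.
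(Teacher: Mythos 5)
The paper states this proposition without proof (it is treated as standard background on homogeneous norms in Carnot groups), so there is nothing to compare line by line; your argument is the standard one and is essentially correct. Step 1 is the classical proof that any two continuous, $\G$-homogeneous-of-degree-one, left-invariant quasi-distances are equivalent (compactness of the unit CC-sphere plus homogeneity plus left-invariance), and Step 2 is the exact scaling identity $|B_\rho(x,r)|=r^Q|B_\rho(0,1)|$ coming from the Jacobian of $\delta_r$; both are fine. The only point worth flagging is in Step 3: the statement is phrased in terms of $\mc H^{Q-1}_\G(\partial B_\rho(x,r))$, while you scale the perimeter measure $|\partial B_\rho(x,r)|_\G(\G)$. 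These two quantities agree (up to a dimensional constant) on $\mathbf C^1$ hypersurfaces only after one knows that the characteristic set of $\partial B_\rho(0,1)$ is negligible for $\mc H^{Q-1}_\G$ — which is exactly the content of the reference to \cite{magnani} recalled after Proposition \ref{perimetro regolare} — or, more directly, one can avoid the perimeter altogether and note that $\mc H^{Q-1}_\G$ itself is left-invariant and $\G$-homogeneous of degree $Q-1$ because the CC-distance is, which gives the scaling identity for the Hausdorff measure without any detour. Your fallback for the positivity of $c'$ via the isoperimetric inequality is the cleanest route and is correct; the ``non-vanishing horizontal gradient'' heuristic you give first is shakier as stated, since $\nabla_\G\rho$ can vanish on the characteristic set, so keep the isoperimetric argument.
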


\begin{definition} We denote by $x\to \wcheck x$ the ``semicheck'' map  
$$(x_1,\dots,x_n)\to
((-1)^{d_1}x_1,(-1)^{d_2}x_2,\dots,(-1)^{d_n}x_n).$$
From now on, we adopt the following notation:$\wcheck f(x,t):=f( \wcheck x,t)$ for any function $f$ 
defined in $\G\times\R$.

\end{definition}

\begin{theorem} \label{wcheck}
We have:
\begin{itemize}
\item[i)] if $j=1,\dots,m$, then $X_j \wcheck = - \wcheck X_j$. In particular, $\mc L  \wcheck =  \wcheck\mc L$;
\item[ii)] if $h$ is the fundamental solution of $\partial_t+\mc L$, then $\wcheck h= h$;
\item[iii)] if $\alpha>0$, $R_\alpha = \wcheck R_\alpha$ and $\tilde R_{-\alpha} = \wcheck \tilde R_{-\alpha}$. In particular, $\wcheck\rho =\rho$;
\item[iv)] $d_c(\wcheck x, \wcheck y)=d_c(x,y)$ for all $x,y\in\G$.
 \item[v)] if $E\subset \G$ is a $\G$-Cacciopoli set, then the perimeter measure $|\partial E|_\G$ is semicheck-invariant.
\end{itemize}
\end{theorem}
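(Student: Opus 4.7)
The plan is to establish (i) as the backbone identity and cascade (ii)--(v) from it by uniqueness, subordination, and a change-of-variable argument, so the only genuine computation happens once. For (i), I would start from the explicit form $X_j = \partial_j + \sum_{d_k>1} p_{j,k}(x)\,\partial_k$ of Proposition \ref{campi omogenei0}, where $p_{j,k}$ is $\G$-homogeneous of degree $d_k-1$. The semicheck map scales any $\G$-homogeneous monomial of degree $\alpha$ by $(-1)^\alpha$, so $p_{j,k}(\wcheck x)=(-1)^{d_k-1}p_{j,k}(x)$; on the other hand, the chain rule gives $\partial_i(\wcheck f)(x)=(-1)^{d_i}(\partial_i f)(\wcheck x)$. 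Since $d_j=1$ for $j\le m$, a direct sign count shows that both the leading and the non-leading terms of $X_j(\wcheck f)$ acquire the same factor $-1$, yielding $X_j\wcheck f=-\wcheck(X_j f)$; squaring this identity makes the sign cancel, so $\mc L\wcheck=\wcheck\mc L$.

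Once (i) is available, (ii) is just uniqueness for the Cauchy problem: both $h$ and $\wcheck h$ solve $(\partial_t+\mc L)u=0$ with initial datum $\delta_0$ (using $\wcheck 0 = 0$). For (iii), I would plug (ii) into the subordination formula of Proposition \ref{pdalpha} and into the analogous representation of $\widetilde R_{-\alpha}$ in Remark \ref{Rbeta}: semicheck commutes with the $t$-integration and fixes $h$, so it also fixes $R_\alpha$ and $\widetilde R_{-\alpha}$, and invariance of $\rho$ is immediate from its definition as a power of $R_{2-\alpha}$. For (iv), I would upgrade (i) to a pointwise identity on tangent vectors: since semicheck is linear and hence equal to its own differential, the same sign count applied to the value $X_j(x)$ shows that its image under semicheck equals $-X_j(\wcheck x)$. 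Consequently, if $\gamma$ is a subunit curve from $x$ to $y$ with controls $(c_1,\dots,c_m)$, then $\wcheck\gamma$ is subunit from $\wcheck x$ to $\wcheck y$ with controls $(-c_1,\dots,-c_m)$, which preserves $\sum c_j^2\le 1$; applying the same reasoning to $\wcheck\gamma$ and using that semicheck is involutive yields the reverse inequality, hence the desired equality of Carnot-Carath\'eodory distances.

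For (v), I would argue from the variational definition of the $\G$-perimeter. The change of variables $y=\wcheck x$ preserves Lebesgue measure, and by (i) the integrand transforms as $(\divg\phi)(\wcheck x)=-\divg(\wcheck\phi)(x)$; since $\phi\mapsto-\wcheck\phi$ is a bijection of the class of admissible horizontal test sections on $\wcheck\Omega$ onto the analogous class on $\Omega$, taking the supremum yields the covariance $|\partial(\wcheck E)|_\G(\wcheck A)=|\partial E|_\G(A)$ for every Borel $A$, which is the sense in which the perimeter is semicheck-invariant. The main obstacle is purely bookkeeping: one must align the factor $(-1)^{d_k-1}$ from $\G$-homogeneity of $p_{j,k}$ with the factor $(-1)^{d_k}$ from the chain rule, and carefully distinguish pull-back from push-forward when passing from the differential identity (i) to the geometric statements (iv) and (v).
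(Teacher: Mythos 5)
Your proposal is correct and follows essentially the same route as the paper: the sign identity $p_{j,k}(\wcheck x)=(-1)^{d_k-1}p_{j,k}(x)$ checked on $\G$-homogeneous monomials, the chain-rule computation for (i), uniqueness of the heat kernel for (ii), subordination through the $t$-integral for (iii), the reparametrized-controls argument for horizontal curves in (iv), and the transformation of $\divg$ under the semicheck map for (v). The only step you state at a higher level than the paper is the uniqueness in (ii), which the paper justifies concretely via hypoellipticity, Folland's decay estimate for $h$ at infinity, and the parabolic maximum principle applied to $h-\wcheck h$.
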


\begin{proof} The core of the proof relies in the following identity.  If $p_{jk}$ are the polynomials defined in Proposition \ref{campi omogenei0},
then
\begin{equation}\label{whomo}
p_{j,k}(\wcheck x) = (-1)^{d_k-1} p_{j,k}(x).
\end{equation}
To prove \eqref{whomo}, we remind that $p_{j,k}$ is a $\G$-homogeneous polynomials of degree $d_k-1$.
Let now $\alpha=(\alpha_1,\dots,\alpha_n)$ be a multi-index, and let $x^\alpha$ be an arbitrary 
$\G$-homogeneous monomial of degree $d_k-1$, i.e. assume 
\begin{equation}\label{sum1}
d_1\alpha_1+\cdots + d_n\alpha_n = d_k-1.
\end{equation}

We have but to show that \eqref{whomo} holds for $x^\alpha$.

If $\ell=1,\dots,n$, we set $I_\ell:=\{i\, ;\, d_i=\ell\}$. Gathering in \eqref{sum1} the terms with
$d_i=\ell$, identity \eqref{sum1} becomes
\begin{equation}\label{sum2}
\sum_{\ell} d_\ell ( \sum_{i\in I_\ell}\alpha_i) = d_k-1.
\end{equation}
Then
$$
(\wcheck x)^\alpha = 
(-1)^{\sum_{\ell} d_\ell ( \sum_{i\in I_\ell}\alpha_i)}\, x^\alpha,
$$
and the assertion follows by \eqref{sum2}.

Let us prove now i). If $u$ is a (say) smooth function, by \eqref{whomo}, we have
\begin{equation*}\begin{split}
X_j & (\wcheck u( x))  = X_j (u(\wcheck x))
\\&
=-(\partial_j u)(\wcheck x) + \sum_{d_k>1} p_{j,k}(x)(-1)^{d_k} (\partial_k u)(\wcheck x)
\\&
=-(\partial_j u)(\wcheck x) - \sum_{d_k>1} p_{j,k}(\wcheck x) (\partial_k u)(\wcheck x)
\\&
= - (X_j u)(\wcheck x) = - \wcheck (X_j u)(x).
\end{split}\end{equation*}
In order to prove ii), let us show preliminarily that $h(t, \wcheck x)$ is still a fundamental
solution of $\partial_t +\mc L$. Indeed, if $u\in \mc D(\R\times\G)$, we have
\begin{equation*}\begin{split}
&\Scal{(\partial_t +\mc L) h(t, \wcheck x)}{u(t,x)} 
=
\Scal{h(t, \wcheck x)}{(-\partial_t +\mc L) u(t,x)}
\\&
=
\Scal{h}{\wcheck (-\partial_t +\mc L) u}
=
\Scal{h}{ (-\partial_t +\mc L) \wcheck u}
\\&
=
\Scal{(\partial_t +\mc L) h}{  \wcheck u} = \wcheck u(0,0) = u(0,0).
\end{split}\end{equation*}
Therefore, the function
$$
h_0 := h - h\wcheck
$$
vanishes at $t=0$ and solves $(\partial_t +\mc L)h_0=0$, being 
in particular smooth in $\R\times\G$, by the hypoellipticity of $\partial_t +\mc L$ (\cite{Ho}).
By \cite{folland}, Corollary 3.5, $h_0(t,x)\to 0$ as $x\to \infty$ uniformly for
$t$ in a bounded interval. Thus we can apply the standard ``parabolic'' maximum
principle to conclude that $h_0\equiv 0$, and then ii) follows.

The proof of iii) is straightforward.
To prove iv), it is enough to show that, if $x,y\in\G$ and $\gamma$ is a horizontal curve
joining $x$ and $y$ with sub-Riemannian length $\ell(\gamma)$, then $\wcheck \gamma$ is still horizontal,
$\ell(\wcheck \gamma)= \ell(\gamma)$, and, obviously, joins $x$ and $y$.

By assumption, we can write
$$
\gamma'(t) = \sum_{j=1}^m a_j(t) X_j(\gamma(t)),\quad t\in [0,1],
$$
i.e, if for any $p\in\G$ we write $p_\ell$ for the $\ell$-th component of
$p$ in exponential coordinates, for $\ell=1,2,\dots,n$, then
\begin{equation}\label{gamma_ell1}
\gamma_\ell' = \sum_{j=1}^m a_j(X_j(\gamma))_\ell=
\sum_{j=1}^m a_j (e_j+\sum_{d_k>1}p_{j,k}(\gamma)e_k)_\ell,
\end{equation}
with
$$
\int_0^1\big(\sum_j a_j^2(t)\big)^{1/2}\, dt = \ell(\gamma).
$$
Notice that \eqref{gamma_ell1} reads as follows:
\begin{equation}\label{gamma_ell2}
\gamma_\ell'  =
\left\{
\begin{aligned} & a_\ell & \mbox{if $1\le\ell\le m$}
\\& \sum_{j}a_jp_{j,\ell}(\gamma) & \mbox{if $\ell> m$}.
\end{aligned}
\right.
\end{equation}
Our assertion will follow by showing that
\begin{equation}\label{gamma_ell4}
(\wcheck \gamma)'(t) = - \sum_{j=1}^m a_j(t) X_j(\wcheck\gamma(t)),\quad t\in [0,1],
\end{equation}
Indeed, by \eqref{whomo},
\begin{equation*}\begin{split}X_j & (\wcheck \gamma) = e_j + \sum_{d_k>1} p_{j,k}(\wcheck \gamma)e_k 
\\&
= (-1)^{d_1-1}e_j + \sum_{d_k>1}(-1)^{d_k-1} p_{j,k}( \gamma)e_k ,
\end{split}\end{equation*}
so that, keeping in mind \eqref{gamma_ell2},
\begin{equation*}\begin{split}\label{gamma_ell3}
( & \sum_{j=1}^m a_j X_j  (\wcheck \gamma))_\ell 
=
\left\{
\begin{aligned} & - (-1)^{d_1} a_\ell = - (\wcheck \gamma)_\ell & \mbox{if $1\le\ell\le m$}
\\& - (-1)^{d_\ell}\sum_{j}a_j p_{j,\ell}( \gamma) = - (\wcheck \gamma)_\ell & \mbox{if $\ell> m$}.
\end{aligned}
\right.
\end{split}\end{equation*}
This proves \eqref{gamma_ell4} and achieves the proof of the theorem, since v) is a
straightforward consequence of i).
\end{proof}

\begin{corollary} If $\alpha>0$ and $j=1,\dots,m$, then
$$
\wcheck (X_j R_\alpha) = - X_j R_\alpha
\quad\mbox{and}\quad
\wcheck (X_j \tilde R_{-\alpha}) = - X_j \tilde R_{-\alpha}.
$$
\end{corollary}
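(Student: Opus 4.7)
The corollary follows almost immediately by combining parts i) and iii) of Theorem \ref{wcheck}. The plan is to simply apply the commutation relation $X_j\wcheck = -\wcheck X_j$ (valid for horizontal indices $j=1,\dots,m$) to the semicheck-invariant kernels $R_\alpha$ and $\widetilde R_{-\alpha}$.

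More concretely, I would argue as follows. By part i) of Theorem \ref{wcheck}, for any (distribution on which the operators act) $f$ we have $\wcheck(X_j f) = -X_j(\wcheck f)$ for $j=1,\dots,m$. Applying this with $f = R_\alpha$ and using part iii), which gives $\wcheck R_\alpha = R_\alpha$, yields
$$
\wcheck(X_j R_\alpha) \;=\; -X_j(\wcheck R_\alpha) \;=\; -X_j R_\alpha.
$$
The same reasoning applied to $\widetilde R_{-\alpha}$, together with $\wcheck \widetilde R_{-\alpha} = \widetilde R_{-\alpha}$ from iii), gives the second identity.

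The only minor point to check is that one is entitled to apply the commutation formula i) in the distributional sense to the kernels $R_\alpha$ and $\widetilde R_{-\alpha}$, which are smooth away from the origin but only locally integrable (respectively, principal-value type) at $0$. This is not an obstacle: the identity $X_j(u\circ \wcheck{\;}) = -(X_j u)\circ \wcheck{\;}$ was established in the proof of i) via the pointwise identity \eqref{whomo} on the polynomial coefficients $p_{j,k}$, and this identity transfers to distributions through duality since the semicheck map is a linear involution of $\G$ (viewed as $\rn n$) with Jacobian $\pm 1$. Hence the argument is really just a two-line verification, and no real obstacle arises.
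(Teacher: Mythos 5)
Your argument is correct and is exactly the (implicit) one the paper intends: the corollary follows immediately from parts i) and iii) of Theorem \ref{wcheck}, for which the paper supplies no separate proof. Your closing caveat about distributions is harmless but unnecessary, since $R_\alpha$ and $\widetilde R_{-\alpha}$ are smooth away from the origin and the semicheck map fixes the origin, so the pointwise commutation identity established in the proof of i) applies directly on $\G\setminus\{0\}$.
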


We follow the guidelines of \cite{folland}, Section 3. We have:
\begin{theorem} The operator $\mc L$ is a positive self-adjoint operator with domain $W_\G^{2,2}(\G)$. Denote now
by $\{E(\lambda)\}$ the spectral resolution of $\mc L$ in $L^2(\G)$. If $\alpha >0$ then
$$
\mc L^{\alpha/2} = \int_0^{+\infty} \lambda^{\alpha/2} dE(\lambda)
$$
with domain
$$
W^{\alpha,2}_\G(\G):=\{u\in L^2(\G)\; :\; \int_0^{+\infty} \lambda^{\alpha} d\scal{E(\lambda)u}{u}<\infty\},
$$
endowed with the graph norm.
\end{theorem}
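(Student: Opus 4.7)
The plan is to follow the classical approach used by Folland in \cite{folland}, Section 3, with two main ingredients: (a) establishing that $\mc L$ initially defined on $\mc D(\G)$ admits a unique positive self-adjoint extension whose domain is exactly $W^{2,2}_\G(\G)$; and (b) invoking the spectral theorem to define $\mc L^{\alpha/2}$ and identify its domain.

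For (a), I would first check symmetry and non-negativity on the core $\mc D(\G)$: since each $X_j$ is left-invariant and the Haar measure of $\G$ is Lebesgue measure (which is also right-invariant), one has the formal adjoint $X_j^\ast = -X_j$, hence by integration by parts
\begin{equation*}
\scal{\mc L u}{v}_{L^2} = \sum_{j=1}^{m} \scal{X_j u}{X_j v}_{L^2} \ge 0 \qquad \text{for all } u,v \in \mc D(\G).
\end{equation*}
Thus $\mc L\vert_{\mc D(\G)}$ is a symmetric, non-negative, densely defined operator. Self-adjointness then follows either by applying Friedrichs' extension theorem to the associated closed quadratic form $Q(u) = \sum_j \|X_j u\|_{L^2}^2$ on $W^{1,2}_\G(\G)$, or, equivalently and more directly in this context, by exhibiting the heat semigroup $T_t f := f \ast h(t,\cdot)$ of Proposition \ref{pdalpha}: $\{T_t\}_{t>0}$ is a strongly continuous symmetric contraction semigroup on $L^2(\G)$, hence its infinitesimal generator is a non-negative self-adjoint extension of $\mc L\vert_{\mc D(\G)}$.

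The identification $\dom(\mc L) = W^{2,2}_\G(\G)$ relies on the subelliptic second-order a priori estimate
\begin{equation*}
\sum_{j,k=1}^{m} \|X_j X_k u\|_{L^2} \le C\bigl(\|\mc L u\|_{L^2} + \|u\|_{L^2}\bigr), \qquad u \in \mc D(\G),
\end{equation*}
which for left-invariant sub-Laplacians on Carnot groups is proved either via the heat-kernel representation together with Calder\'on–Zygmund theory on spaces of homogeneous type, or via the Rothschild–Stein lifting method; combined with the density of $\mc D(\G)$ in $W^{2,2}_\G(\G)$, this gives both inclusions and forces the Friedrichs and heat-semigroup extensions to coincide. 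For (b), once self-adjointness is in hand the spectral theorem produces a spectral resolution $\{E(\lambda)\}_{\lambda\ge 0}$ supported on $[0,\infty)$ with $\mc L = \int_0^\infty \lambda\, dE(\lambda)$, and the Borel functional calculus applied to $\phi(\lambda)=\lambda^{\alpha/2}$ yields the stated formula on the natural domain
\begin{equation*}
\Bigl\{u\in L^2(\G)\,:\, \int_0^\infty \lambda^{\alpha}\, d\scal{E(\lambda)u}{u} < \infty\Bigr\},
\end{equation*}
which coincides with $W^{\alpha,2}_\G(\G)$ by definition.

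The step requiring the most care is the domain identification $\dom(\mc L) = W^{2,2}_\G(\G)$: the subelliptic second-order estimate is nontrivial in this setting, and one must verify that $\mc D(\G)$ is simultaneously a form core and an operator core in order to be sure that the Friedrichs extension and the generator of the heat semigroup agree, and that neither admits a strictly larger self-adjoint extension. The remaining steps — symmetry, positivity, and the spectral-calculus manipulations — are essentially abstract and become routine once self-adjointness and the domain description have been secured.
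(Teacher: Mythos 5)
Your proposal is correct and follows essentially the same route as the paper, which offers no proof of its own beyond the remark ``we follow the guidelines of \cite{folland}, Section 3'' — precisely the source and the argument (symmetry via $X_j^\ast=-X_j$ on the unimodular group, the heat semigroup / Friedrichs extension for self-adjointness, the subelliptic second-order a priori estimate for $\dom(\mc L)=W^{2,2}_\G(\G)$, then the Borel functional calculus) that you reconstruct. You also correctly isolate the only genuinely delicate points, namely essential self-adjointness on $\mc D(\G)$ and the identification of the operator domain with the Folland--Stein space, which is exactly what Folland's estimates supply.
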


\begin{theorem} If  $u\in\mc S(\G)$,
and $0<\alpha<2$, then $\mc L^{\alpha/2} u\in L^2(\G)$, and
\begin{equation*}\begin{split}
\mc L^{\alpha/2} u(x) & = \int_\G \Big(u(xy)-u(x)-\omega(y)\scal{\nabla_\G u(x)}{y}\Big)\widetilde R_{-\alpha}(y)\; dy
\\&= \mathrm{P.V.}  \int_\G (u(y)-u(x))\widetilde R_{-\alpha}(y^{-1}x)\; dy
,
\end{split}\end{equation*}
where $\omega$ is the characteristic function of the unit ball $B_\rho(0,1)$.
\end{theorem}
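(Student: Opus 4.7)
The plan is to start from the identity $\mc L^{\alpha/2}u = \mc L u \ast R_{2-\alpha}$ of Proposition \ref{pdalpha}(ii) and to transfer the sub-Laplacian onto the kernel, which formally yields $u\ast\widetilde R_{-\alpha}$. Since $\widetilde R_{-\alpha}$ is $\G$-homogeneous of degree $-\alpha-Q$ and therefore not locally integrable at the origin, this transfer has to be understood as a regularized limit of integrations by parts performed on $\G\setminus B_\rho(x,\varepsilon)$, with a Carnot-group Taylor polynomial subtracted to compensate the singularity at $y=x$. The $L^2$-membership of $\mc L^{\alpha/2}u$ for $u\in\mc S(\G)$ is an immediate consequence of the spectral representation recalled just before the statement, since $\int_0^\infty\lambda^\alpha\,d\scal{E(\lambda)u}{u}\leq \|u\|_{L^2}^2+\|\mc Lu\|_{L^2}^2$ using $\lambda^\alpha\leq 1+\lambda^2$ on $[0,\infty)$ for $\alpha\in(0,2)$.

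On $\G\setminus B_\rho(x,\varepsilon)$ the kernel $y\mapsto R_{2-\alpha}(y^{-1}x)$ is smooth, so a double application of Proposition \ref{divergence} (first to the horizontal vector field $R_{2-\alpha}\nabla_\G u$, then to $u\,\nabla_\G R_{2-\alpha}$) rewrites
\begin{equation*}
\int_{B_\rho(x,\varepsilon)^c}\mc L u(y)\,R_{2-\alpha}(y^{-1}x)\,dy
=
\int_{B_\rho(x,\varepsilon)^c}u(y)\,\widetilde R_{-\alpha}(y^{-1}x)\,dy+\mc B_\varepsilon(x),
\end{equation*}
where $\mc B_\varepsilon(x)$ collects two surface integrals on $\partial B_\rho(x,\varepsilon)$. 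The identification of the inner $\mc L$-action on $y\mapsto R_{2-\alpha}(y^{-1}x)$ with $\widetilde R_{-\alpha}(y^{-1}x)$ hinges on the evenness $R_{2-\alpha}(y)=R_{2-\alpha}(y^{-1})$ (a consequence of $h(t,y)=h(t,y^{-1})$, itself following from the self-adjointness of $\mc L$ together with Theorem \ref{wcheck}) and on the elementary computation $X_j^{(y)}[R_{2-\alpha}(y^{-1}x)] = -(\tilde X_j R_{2-\alpha})(y^{-1}x)$ for $\tilde X_j$ the right-invariant field agreeing with $X_j$ at the identity. By the homogeneity degrees $2-\alpha-Q$, $1-\alpha-Q$, $Q-1$ of $R_{2-\alpha}$, $\nabla_\G R_{2-\alpha}$, and $|\partial B_\rho|_\G$, one piece of $\mc B_\varepsilon(x)$ is of size $\varepsilon^{1-\alpha}\to 0$, while the other is of size $|u(x)|\,\varepsilon^{-\alpha}$ and diverges.

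To remove this divergence I would replace $u(y)$ in the volume integrand by $u(y)-T_x(y)$, where $T_x(y):=u(x)+\omega(y^{-1}x)\scal{\nabla_\G u(x)}{y^{-1}x}$ is the Folland-Stein first-order Taylor polynomial of $u$ centered at $x$. The subtracted constant piece contributes a boundary term that cancels the $\varepsilon^{-\alpha}$ divergence; the horizontal-linear piece contributes a surface integral that vanishes by oddness against the even kernel $R_{2-\alpha}(\cdot^{-1}x)$ on the $\rho$-sphere. After cancellation the remaining integrand satisfies
\begin{equation*}
|u(y)-T_x(y)|\cdot|\widetilde R_{-\alpha}(y^{-1}x)| = O\!\left(\rho(y^{-1}x)^{2-\alpha-Q}\right)
\end{equation*}
near $y=x$, so the limit integral converges absolutely on all of $\G$ (at infinity one combines $u\in\mc S(\G)$ with $|\widetilde R_{-\alpha}|=O(\rho^{-\alpha-Q})$). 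Passing to $\varepsilon\to 0$ and performing the change of variable $z=y^{-1}x$, together with the evenness $\widetilde R_{-\alpha}(z^{-1})=\widetilde R_{-\alpha}(z)$, yields the first displayed identity of the statement. The principal-value form is then equivalent, since the same evenness gives $\mathrm{PV}\int\omega(z)\scal{\nabla_\G u(x)}{z}\widetilde R_{-\alpha}(z)\,dz=0$, so the horizontal-linear correction may be dropped once the PV regularization is in place.

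The main obstacle is the boundary bookkeeping of the middle step: one must verify that the first-order Taylor subtraction cancels the $\varepsilon^{-\alpha}$ surface contribution \emph{exactly}, without leaving residuals of order $\varepsilon^{1-\alpha}$ or $\log\varepsilon$. This requires the explicit Folland-Stein Taylor expansion of $u\in\mc S(\G)$ along the adapted basis, the precise expression of the intrinsic horizontal normal $n_\G$ on the $\rho$-spheres, and careful use of the homogeneity of $R_{2-\alpha}$ and $\widetilde R_{-\alpha}$.
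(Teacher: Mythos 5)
Your route is the same as the paper's: start from $\mc L^{\alpha/2}u=\mc Lu\ast R_{2-\alpha}$, apply the Green/divergence identity on the annulus $\{\eps<\rho(y^{-1}x)<R\}$, let $R\to\infty$, subtract the Folland--Stein first-order Taylor polynomial to tame the singularity, and use parity to kill the linear correction. The $L^2$ claim via the spectral theorem is fine. However, there is a genuine gap in your boundary bookkeeping, and it affects exactly half of the stated range of $\alpha$. You assert that after the constant subtraction the surviving surface contributions on $\{\rho=\eps\}$ are ``of size $\eps^{1-\alpha}\to 0$.'' The size estimate is correct ($R_{2-\alpha}\sim\eps^{2-\alpha-Q}$ or $u(y)-T_x(y)$ against $\nabla_\G R_{2-\alpha}\sim\eps^{1-\alpha-Q}$, times surface measure $\sim\eps^{Q-1}$), but $\eps^{1-\alpha}$ tends to $0$ only for $\alpha<1$; for $1\le\alpha<2$ it is bounded below or blows up, so your argument does not close there.

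The missing ingredient is a pair of vanishing-moment identities on the $\rho$-spheres, which is precisely what the paper proves: $\int_{\rho=\eps}R_{2-\alpha}\,\scal{X_j}{\nu}\,d\mc H^{n-1}=0$ and $\int_{\rho=\eps}y_\ell\,\sum_j(X_jR_{2-\alpha})\scal{X_j}{\nu}\,d\mc H^{n-1}=0$ for $1\le\ell\le m$. Both follow from the ``semicheck'' parity of Theorem \ref{wcheck} (note that the relevant symmetry is $y\mapsto\wcheck y$, which flips only the coordinates of odd homogeneity, not the group inverse $y\mapsto y^{-1}$, although the two agree on the horizontal coordinates that matter here). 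The first identity lets you replace $X_ju(xy)$ by $X_ju(xy)-X_ju(x)=O(\eps)$ in the gradient boundary term; the second lets you subtract the full first-order Taylor polynomial in the other boundary term. Each upgrade gains one power of $\eps$, turning $O(\eps^{1-\alpha})$ into $O(\eps^{2-\alpha})=o(1)$ for all $\alpha\in(0,2)$. Relatedly, your statement that the horizontal-linear piece vanishes ``by oddness against the even kernel $R_{2-\alpha}$'' pairs the wrong kernel with that term: on the boundary the linear piece meets the surface density $\sum_j(X_jR_{2-\alpha})\scal{X_j}{\nu}\,d\mc H^{n-1}=|\nabla_\G R_{2-\alpha}|\,d\mc H^{Q-1}_\G$, and it is the evenness of this density (not of $R_{2-\alpha}$ itself) that does the job. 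With these two identities in place your argument matches the paper's.
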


\begin{proof} First of all, we notice that the map 
$$y\to (u(xy)-u(x)-\omega(y)\scal{\nabla_\G u(x)}{y})\widetilde R_{-\alpha}(y)$$
belongs to $L^1(\G)$.
Indeed, 
$$
(u(xy)-u(x)-\omega(y)\scal{\nabla_\G u(x)}{y})\widetilde R_{-\alpha}(y)=O(\rho(y)^{-Q-\alpha})$$
 as 
$y\to\infty$, and 
$$
(u(xy)-u(x)-\omega(y)\scal{\nabla_\G u(x)}{y})\widetilde R_{-\alpha}(y)=O(\rho(y)^{-Q+2-\alpha})$$
 as 
$y\to 0$, since (\cite{folland_stein}, (1.37))
$$
u(xy)-u(x)-\scal{\nabla_\G u(x)}{y} = O(\rho(y)^2)
$$

If $\eps>0$, keeping in mind that both $\rho$ and $\widetilde R_{-\alpha}$ are check-invariant, we can write
$$
\int_{\rho(y^{-1}x)>\eps} (u(y)-u(x))\widetilde R_{-\alpha}(y^{-1}x)\; dy
= \int_{\rho(y)>\eps} (u(xy)-u(x))\widetilde R_{-\alpha}(y)\; dy.
$$
Notice both  integral are absolutely convergent, since $y\to (u(xy)-u(x))\widetilde R_{-\alpha}(y)$ is a smooth
function away from the origin and $(u(xy)-u(x))\widetilde R_{-\alpha}(y) =O(\rho(y)^{-Q-\alpha})$ as 
$y\to\infty$. On the other hand, the map $y\to\omega(y)\scal{\nabla_\G u(x)}{y}\widetilde R_{-\alpha}(y)$ (that belongs to 
$L^1(\{\rho(y)>\eps\})$) has zero integral, since $\omega(y) \widetilde R_{-\alpha}(y)$ is check-invariant, 
whereas $\scal{\nabla_\G u(x)}{y^{-1}} = \scal{\nabla_\G u(x)}{y}$. Therefore, we can write
\begin{equation*}\begin{split}
\int_{\rho(y^{-1}x)>\eps} & (u(y)-u(x))\widetilde R_{-\alpha}(y^{-1}x)\; dy
\\&
= \int_{\rho(y)>\eps}  \Big(u(xy)-u(x)-\omega(y)\scal{\nabla_\G u(x)}{y}\Big)\widetilde R_{-\alpha}(y)\; dy,
\end{split}\end{equation*}
so that
$$
 \int_\G \Big(u(xy)-u(x)-\omega(y)\scal{\nabla_\G u(x)}{y}\Big)\widetilde R_{-\alpha}(y)\; dy
= \mathrm{P.V.}  \int_\G (u(y)-u(x))\widetilde R_{-\alpha}(y^{-1}x)\; dy.
$$
We want to show now that
\begin{equation}\label{toprove}\begin{split}
\int_{\rho(y^{-1}x)>\eps} & (u(y)-u(x))\widetilde R_{-\alpha}(y^{-1}x)\; dy 
\\&
=
  \int_{\rho(y^{-1}x)>\eps}  \mc Lu(y)R_{2-\alpha}(x^{-1}y)\; dy + o(1)
\end{split}\end{equation}
as $\eps \to 0$. Notice both integrals absolutely converge at infinity.

Take now $R>\eps$. By Green identity (see e.g. \cite{BLU},  formula (5.43b)), we have
\begin{equation*}\begin{split}
\int_{\eps < \rho(y^{-1}x) <R} & (u(y)-u(x))\widetilde R_{-\alpha}(y^{-1}x)\; dy
\\& = 
\int_{\eps < \rho(y^{-1}x) <R}  (u(y)-u(x))\mc L R_{2-\alpha}(y^{-1}x)\; dy
\\& = 
  \int_{\eps < \rho(y^{-1}x) <R}  \mc Lu(y)R_{2-\alpha}(x^{-1}y)\; dy
  \\& +
   \int_{\eps = \rho(y^{-1}x)}  R_{2-\alpha}(x^{-1}y) \sum_j X_j  (u(y)-u(x)) \scal{X_j}{\nu}\; d\mc H^{n-1}(y)
    \\& -
   \int_{\eps = \rho(y^{-1}x)}  (u(y)-u(x)) \sum_j X_j R_{2-\alpha}(x^{-1}y)\scal{X_j}{\nu}\; d\mc H^{n-1}(y)
    \\& +
   \int_{R = \rho(y^{-1}x)}  R_{2-\alpha}(x^{-1}y) \sum_j X_j  (u(y)-u(x)) \scal{X_j}{\nu}\; d\mc H^{n-1}(y)
    \\& -
   \int_{R = \rho(y^{-1}x)}  (u(y)-u(x)) \sum_j X_j R_{2-\alpha}(x^{-1}y)\scal{X_j}{\nu}\; d\mc H^{n-1}(y)
   \\& = 
  \int_{\eps < \rho(y^{-1}x) <R}  \mc Lu(y)R_{2-\alpha}(x^{-1}y)\; dy
  \\&
  + I^1(\eps)+ I^2(\eps) + J^1(R)+ J^2(R),
  \end{split}\end{equation*}
where $\nu$ in the outward unit normal to $\{\eps < \rho(y^{-1}x) <R\}$. Obviously, $J_1$
vanishes 
as $R\to\infty$. Again, by Remark \ref{eqnalfa}, if $R$ is large, we have
\begin{equation*}\begin{split}
| & J^2( R ) | \le C|u(x)| 
R^{1-\alpha-Q} \int_{R = \rho(y^{-1}x)}   \sum_j |\scal{X_j}{\nu}|\; d\mc H^{n-1}(y)
\\&\le
C|u(x)| 
R^{1-\alpha-Q} \int_{R = \rho(y^{-1}x)}   \; d\mc H^{Q-1}_\G (y)\quad\mbox{(by 
Proposition \ref{perimetro regolare})}
\\&
= O(R^{-\alpha}),
\end{split}\end{equation*}
 by \eqref{perimeter ball}. Thus we can take above the limit as $R\to\infty$ and we get
\begin{equation*}\begin{split}
\int_{\eps < \rho(y^{-1}x) } & (u(y)-u(x))\widetilde R_{-\alpha}(y^{-1}x)\; dy
\\& = 
  \int_{\eps < \rho(y^{-1}x) }  \mc Lu(y)R_{2-\alpha}(x^{-1}y)\; dy
  + I^1(\eps)+ I^2(\eps) 
  \end{split}\end{equation*}
(notice again both integrals are absolutely convergent).

Thus, \eqref{toprove} will follow by showing that $ I^1(\eps)+ I^2(\eps) =
o(1)$ as $\eps\to 0$.

Consider $I_1(\eps)$. First of all, we notice that
\begin{equation}\label{eq10}\begin{split}
  \int_{\eps  = \rho(y^{-1}x)} & R_{2-\alpha}(x^{-1}y)  \scal{X_j}{\nu}\; d\mc H^{n-1}(y)
  \\&
  =  \int_{\eps  = \rho(y)}  R_{2-\alpha}(y)  \scal{X_j}{\nu}\; d\mc H^{n-1}(y)
   = 0
\end{split}\end{equation}
for $j=1,\dots, m$. Indeed we can write
\begin{equation*}\begin{split}
  =  \int_{\eps  = \rho} & R_{2-\alpha}  \scal{X_j}{\nu}\; d\mc H^{n-1}
  \\&
   =  \int_{\eps  = \rho}  R_{2-\alpha} (X_jR_{2-\alpha})|\nabla R_{2-\alpha}|^{-1}\; d\mc H^{n-1},
\end{split}\end{equation*}
and \eqref{eq10} follows, since $\rho$, $R_{2-\alpha}$, $|\nabla R_{2-\alpha}|$, and $\mc H^{n-1}$
are even under the change of variables $y\to \wcheck y$, whereas $X_jR_{2-\alpha}$ is
odd. Thus, by Proposition \ref{perimetro regolare}, we can write
\begin{equation*}\begin{split}
I_1& (\eps) =  \int_{\eps = \rho(y)}  R_{2-\alpha}(y) \sum_j (X_j  u)(xy) \scal{X_j}{\nu}\; d\mc H^{n-1}(y)
\\& =  \int_{\eps = \rho(y)}  R_{2-\alpha}(y) \sum_j [ X_j  u(xy)- X_ju(x)] \scal{X_j}{\nu}\; d\mc H^{n-1}(y)
\\&
\le
C\max |X^2u|\,\eps^{3-\alpha-Q} \int_{\eps = \rho} \; d\mc H^{Q-1}_\G (y)
\\&
= O(\eps^{2-\alpha}) = o(1)\quad\mbox{as $\eps\to 0$}.
\end{split}\end{equation*}
Finally, $I^2(\eps)$ can be estimated by similar arguments. We write
$$
I^2(\eps)=- \int_{\eps = \rho}  (u(xy)-u(x)) \sum_j (X_j R_{2-\alpha})\scal{X_j}{\nu}\; d\mc H^{n-1}(y),
$$
and we notice that, if $1\le  \ell\le m$
$$
\int_{\eps = \rho} y_\ell \, \sum_j (X_j R_{2-\alpha})\scal{X_j}{\nu}\; d\mc H^{n-1}(y)=0.
$$
Indeed, keeping again in mind  Proposition \ref{perimetro regolare},  we have
\begin{equation*}\begin{split}
\int_{\eps = \rho} &  y_\ell \, \sum_j (X_j R_{2-\alpha})\scal{X_j}{\nu}\; d\mc H^{n-1}(y)
\\&
=
\int_{\eps = \rho}   y_\ell \,  \sum_j |X_j R_{2-\alpha}|^2 |\nabla R_{2-\alpha}|^{-1} \; d\mc H^{n-1}(y)
\\&
=
\int_{\eps = \rho}   y_\ell \,   |\nabla_\G R_{2-\alpha}|(\sum_j\scal{X_j}{\nu}^2)^{1/2} \; d\mc H^{n-1}(y)
\\&
=
\int_{\eps = \rho}   y_\ell \,   |\nabla_\G R_{2-\alpha}| \; d\mc H^{Q-1}_\G (y) = 0
\end{split}\end{equation*}
since both $ |\nabla_\G R_{2-\alpha}| $ and $\mc H^{Q-1}_\G$ are even with respect to the
change of variable $y\to \wcheck w$, whereas $y_\ell$ is odd.

Therefore, keeping in mind Taylor inequality in $\G$ (see, e.g. \cite{folland_stein} Theorem 1.37),  
as well as Remark \ref{eqnalfa} and, again, Proposition \ref{perimetro regolare}, we can write
\begin{equation*}\begin{split}
| & I^2(\eps)| 
\\&
 = \Big| \int_{\eps = \rho}  (u(xy)-u(x)-\sum_\ell X_\ell u(x)y_\ell)
 \sum_j (X_j R_{2-\alpha})\scal{X_j}{\nu}\; d\mc H^{n-1}(y) \Big|
 \\&
\hphantom{xxx} =
 O(\eps^{3-\alpha-Q}) \mc H^{Q-1}_\G \big(\{\eps = \rho\}\big)
 = O(\eps^{2-\alpha}) = o(1).
 \end{split}\end{equation*}
This achieves the proof of \eqref{toprove}. Taking the limit as $\eps\to 0$ in \eqref{toprove},
and keeping in mind that $R_{2-\alpha} \mc Lu \in L^1(\G)$,
we get eventually
\begin{equation*}\begin{split}
\mathrm{P.V} & \int_{\G}  (u(y)-u(x))\widetilde R_{-\alpha}(y^{-1}x)\; dy 
=
  \int_{\G}  \mc Lu(y)R_{2-\alpha}(x^{-1}y)\; dy
  \\&
  =
  \mc L^{\alpha/2} u,
\end{split}\end{equation*}
by Proposition \ref{pdalpha}.
This achieves the proof of the theorem.
\end{proof}

\section{Main results}\label{main}

\begin{proposition}[see also Caffarelli \& Silvestre \cite{caffarelli_silvestre}]
\label{eqdiff} If $-\infty<\alpha<1$, the boundary value problem
\begin{equation}\label{equazionediff}
\left\{\begin{array}{l}
-t^{\alpha}\phi''+\phi=0\\
\phi(0)=1\\
\lim_{t\to+\infty}\phi(t)=0
\end{array}
\right.
\end{equation}
has a solution $\phi\in \mathbf C^{2-\alpha}([0,\infty))$ 
of the form
$$
\phi(t) = c_\alpha\, t^{1/2} K_{1/2k}(k^{-1}t^{k}),
$$
where $c_\alpha:= 2^{1-1/2k}\Gamma(1/2k)^{-1}k^{-1/2k}>0$ is a positive constant, $k= \frac{2-\alpha}{2}$, and $K_{1/2k}$ is the modified Bessel function of second kind
(see \cite{watson}).  We know that 
\begin{itemize}
\item[i)] $0<\phi <1$. Moreover $\phi'(t)$ has a finite limit as $t\to 0$ and, recursively, 
$$
t^{\alpha+h-2}\phi^{(h)}(t)\mbox{ has a finite limit as $t\to 0$}
$$
for $h=2,3,\dots$;
\item[ii)] $\phi'\in L^2((0,\infty))$;
\item[iii)] $\phi(t)= c\, \sqrt{\frac{\pi k}{2}}\,t^{\alpha/2}\,e^{-t^k/k}\big(1+O(\frac1t)\big)$ as $t\to\infty$;
\item[iv)] $\phi^{(h)}(t) = c_h \,t^{\alpha (1-h)/2}\,e^{-t^k/k}\big(1+o(1)\big)$ as $t\to\infty$ for $h=1,2,\dots$.
\end{itemize}
\end{proposition}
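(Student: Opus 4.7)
The plan is to reduce the ODE $-t^\alpha \phi'' + \phi = 0$ to a modified Bessel equation by the classical Emden-Fowler substitution
$$
\phi(t) = t^{1/2}\, f(s), \qquad s = \frac{t^k}{k}, \qquad k = \frac{2-\alpha}{2}.
$$
A direct computation (using $ds/dt = t^{k-1}$ and noting that $t^{-\alpha} = t^{2k-2}$) yields, after dividing through by $t^{2k-3/2}$ and rewriting powers of $t$ in terms of $s$ via $t^k = ks$, the modified Bessel equation
$$
s^2 f''(s) + s f'(s) - \bigl(s^2 + \tfrac{1}{4k^2}\bigr)\, f(s) = 0,
$$
of order $\nu = 1/(2k)$. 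The general solution is therefore $f(s) = A\, I_\nu(s) + B\, K_\nu(s)$, so that $\phi(t) = t^{1/2}[A\, I_\nu(t^k/k) + B\, K_\nu(t^k/k)]$.

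Next I would impose the two boundary conditions. Since $I_\nu(s) \sim (2\pi s)^{-1/2}e^s$ and $K_\nu(s) \sim (\pi/2s)^{1/2}e^{-s}$ as $s\to\infty$, the requirement $\phi(\infty)=0$ forces $A=0$. To determine $B$, I would use the classical expansion $K_\nu(s) \sim \tfrac{1}{2}\Gamma(\nu)(s/2)^{-\nu}$ as $s\to 0^+$ (recall $\nu>0$) together with the identity $k\nu = 1/2$, which gives
$$
\phi(t) \sim B\, t^{1/2}\cdot \tfrac{1}{2}\Gamma(\nu)\, 2^\nu k^\nu t^{-1/2} = B\cdot 2^{\nu-1}\Gamma(\nu)\, k^\nu
$$
as $t\to 0^+$; setting this equal to $1$ reproduces exactly the constant $c_\alpha$ in the statement.

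For the qualitative information I would argue as follows. Positivity $\phi>0$ is inherited from $K_\nu>0$ and $c_\alpha>0$, and the inequality $\phi<1$ on $(0,\infty)$ follows from the fact that $\phi$ is convex on $(0,\infty)$ (since $\phi''=t^{-\alpha}\phi>0$), together with the boundary values $\phi(0)=1$, $\phi(\infty)=0$; a convex function with these boundary conditions is strictly decreasing. The statements about finite limits of $t^{\alpha+h-2}\phi^{(h)}(t)$ as $t\to 0$ come from the series representation
$$
K_\nu(s) = \frac{\pi}{2\sin(\nu\pi)}\bigl[I_{-\nu}(s) - I_\nu(s)\bigr], \qquad I_{\pm\nu}(s) = \sum_{m\ge 0}\frac{(s/2)^{2m\pm\nu}}{m!\,\Gamma(m\pm\nu+1)},
$$
after substituting $s=t^k/k$ and multiplying by $t^{1/2}$. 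Using $k\nu=1/2$, the $I_{-\nu}$ series contributes even integer powers $t^{2km}$ and the $I_{\nu}$ series contributes $t^{1+2km}$; differentiating $h$ times and multiplying by $t^{\alpha+h-2}$ collapses the leading non-constant term to $O(1)$ and all subsequent terms to $o(1)$, since $\alpha + 2k - 2 = 0$. The membership $\phi'\in L^2((0,\infty))$ is then obtained by integration by parts: using $\phi''=t^{-\alpha}\phi$,
$$
\int_0^\infty (\phi')^2\, dt = \bigl[\phi\phi'\bigr]_0^\infty - \int_0^\infty \phi\phi''\, dt = -\phi'(0) - \int_0^\infty t^{-\alpha}\phi^2\, dt,
$$
and both terms on the right-hand side are finite, the first by item (i) and the second since $\alpha<1$ (integrability at $0$) and $\phi$ decays exponentially at infinity.

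Finally, the sharp asymptotics in (iii) and (iv) follow from the classical full asymptotic expansion of $K_\nu$ at infinity, namely $K_\nu(s) = \sqrt{\pi/2s}\,e^{-s}(1+O(1/s))$, substituted with $s = t^k/k$. For the derivatives in (iv), I would combine this with the recursion $K_\nu'(s) = -\tfrac12[K_{\nu-1}(s)+K_{\nu+1}(s)]$ (applied iteratively, together with the chain rule $ds/dt = t^{k-1}$) to obtain an expansion of $\phi^{(h)}$ whose leading term has the stated exponential factor $e^{-t^k/k}$ and whose algebraic prefactor scales as a power of $t$. The main obstacle in this whole plan is purely bookkeeping: keeping track of the exponents in $t$ produced by the chain rule and the recursion for $K_\nu$ when iterating $h$ times, and verifying that the subleading terms really fall inside the claimed $o(1)$ error. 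Once the reduction to a Bessel equation is made, everything else reduces to well-known asymptotic properties of $K_\nu$.
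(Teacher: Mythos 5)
Your proposal is correct, and it is considerably more self-contained than what the paper actually writes. The paper's proof consists of a single paragraph devoted to item (iv): it reduces to $h=1$ by iteration, uses convexity to get $\phi'(t)\to 0$ at infinity and hence the representation $\phi'(t)=-\int_t^\infty s^{-\alpha}\phi(s)\,ds$, and concludes by l'H\^opital; the existence of $\phi$, the Bessel form, and items (i)--(iii) are taken as known from the Bessel-function literature (Watson) and from Caffarelli--Silvestre. You instead derive everything: the Emden--Fowler reduction to the modified Bessel equation of order $\nu=1/(2k)$ is computed correctly (I checked: $s^2f''+sf'-(s^2+\tfrac{1}{4k^2})f=0$), the normalization $B=c_\alpha$ follows correctly from $K_\nu(s)\sim\tfrac12\Gamma(\nu)(s/2)^{-\nu}$ and $k\nu=1/2$, the convexity argument for $0<\phi<1$ is the same as the paper's implicit one, the Frobenius-series argument for the limits in (i) is sound (note that for $h\ge 2$ the terms $a_0$ and $b_0t$ are killed by differentiation, which is why only the powers $t^{2km+j}$ with $m\ge1$ survive and give bounded contributions), and the integration by parts for (ii) is valid (though overkill: $\phi'$ is bounded near $0$ by (i) and decays exponentially, so (ii) is immediate). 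For (iv) your route via the recursion $K_\nu'=-\tfrac12(K_{\nu-1}+K_{\nu+1})$ is genuinely different from the paper's ODE-plus-l'H\^opital trick; both work, but the paper's is less bookkeeping once (iii) is granted.

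One concrete issue your "bookkeeping" would uncover: substituting $K_\nu(s)=\sqrt{\pi/2s}\,e^{-s}(1+O(1/s))$ with $s=t^k/k$ into $\phi=c_\alpha t^{1/2}K_\nu(t^k/k)$ gives the algebraic prefactor $t^{(1-k)/2}=t^{\alpha/4}$, not the $t^{\alpha/2}$ stated in (iii) (check with $\alpha=-2$, i.e.\ $\phi''=t^2\phi$, $k=2$: one gets $\phi\sim c\sqrt{\pi}\,t^{-1/2}e^{-t^2/2}$, exponent $\alpha/4=-1/2$, not $\alpha/2=-1$; the WKB prefactor $Q^{-1/4}=t^{\alpha/4}$ confirms this). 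Correspondingly, since each differentiation multiplies the leading term by $-t^{k-1}=-t^{-\alpha/2}$, item (iv) should read $t^{\alpha/4-h\alpha/2}$ rather than $t^{\alpha(1-h)/2}$. This is a slip in the paper's statement (harmless for the rest of the paper, which only uses boundedness of $\phi'$ and exponential decay), but you should be aware that carrying out your correct computation will not reproduce the exponents as literally written, and you should state the corrected ones.
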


\begin{proof}
By iteration, we can reduce ourselves to prove the assertion for $h=1$. Since $\phi$ is convex, $\phi'(t)\to 0$
as $t\to\infty$ and we can write
$$
\phi'(t) = \int_t^\infty s^{-\alpha}\phi(s)\, ds = c\, \sqrt{\frac{\pi k}{2}}\, \int_t^\infty s^{-\alpha/2} \,e^{-s^k/k}\big(1+o(1)\big)\, ds.
$$
Then the estimate follows by the de l'H\^opital's rule.
\end{proof}

\begin{remark} The exact value of $\phi'(0)$ can be 
 explicitly computed keeping in mind that
 $$
 \phi'(0) = c_\alpha \int_0^\infty s^{-\alpha + 1/2} K_{1/2k}(\frac1k s^k)\, ds
 = \frac{c_\alpha}{k} \int_0^\infty t^{(a+1)/2} K_{1/2k}(\frac1k t)\, dt,
 $$
and that the last integral in turn can be explicitly evaluated by \cite{GR}, 6.561 (16).
\end{remark}

Put $\theta:= (1-a)^{a-1}$. If $u\in W^{1-a,2}(\G)$, for $y>0$ we set
\begin{equation}\label{def v}\begin{split}
v(\cdot,y) : &= \phi(\theta y^{1-a} \, \mc L^{(1-a)/2})u := \int_0^\infty \phi(\theta y^{1-a} \, \lambda^{(1-a)/2})\,dE(\lambda)u,
\end{split}\end{equation}

Notice $v$ is well defined since $\phi$ is continuous and bounded in $[0,\infty)$.

Choose now
$$
\alpha =-\frac{2a}{1-a}.
$$
\begin{proposition}\label{prop v}Set 
$ \Sigma_+=\G_x\times (0,1)_y$ and $ \Sigma_+^\eps=\G_x\times (\eps,1)_y$.
If $$s\ge 1 - \frac{a+1}{2}\quad\mbox{and}\quad
u\in W^{s,2}(\G),$$
 then $v\in W^{1,2}_{\hat \G}(\Sigma_+; \, y^adx\,dy)$ and
 \begin{equation}\label{prop v eq:1}
 \|v\|_{W^{1,2}_{\hat G}(\Sigma_+; \, y^adx\,dy)} \le C\,  \|u\|_{W^{s,2}(\G)} .
\end{equation}

Moreover, if 
$$s\ge 2 - \frac{a+1}{2}\quad\mbox{and}\quad
u\in W^{s,2}(\G),$$
 then $v\in W^{2,2}_{\hat G}(\Sigma_+^\eps; \, y^adx\,dy)$
 for any $\eps>0$.

\end{proposition}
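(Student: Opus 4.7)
The plan is to diagonalize the problem via the spectral calculus of $\mc L$. For each fixed $y>0$, $v(\cdot,y)$ is the image of $u$ under the bounded Borel multiplier $m_y(\lambda):=\phi(\theta y^{1-a}\lambda^{(1-a)/2})$, whose $y$-derivatives are also bounded in $\lambda$ thanks to Proposition~\ref{eqdiff}. Plancherel on the spectral side together with $\|\nabla_\G w\|_{L^2(\G)}^2=\scal{\mc Lw}{w}$ give
\begin{equation*}
\|v(\cdot,y)\|_{L^2(\G)}^2=\int_0^\infty m_y(\lambda)^2d\scal{E(\lambda)u}{u},\qquad\|\nabla_\G v(\cdot,y)\|_{L^2(\G)}^2=\int_0^\infty\lambda\,m_y(\lambda)^2d\scal{E(\lambda)u}{u},
\end{equation*}
while differentiating under the spectral integral in $y$ produces
\begin{equation*}
\|\partial_y v(\cdot,y)\|_{L^2(\G)}^2=\theta^2(1-a)^2 y^{-2a}\int_0^\infty\lambda^{1-a}\bigl[\phi'(\theta y^{1-a}\lambda^{(1-a)/2})\bigr]^2d\scal{E(\lambda)u}{u}.
\end{equation*}
I would then apply Fubini to the weighted integrals on $\Sigma_+$ and change variables $t=\theta y^{1-a}\lambda^{(1-a)/2}$, so $y\in(0,1)$ corresponds to $t\in(0,T(\lambda))$ with $T(\lambda):=\theta\lambda^{(1-a)/2}$. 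A direct computation yields the two clean identities
\begin{equation*}
\int_0^1 y^{-a}\bigl[\phi'(\theta y^{1-a}\lambda^{(1-a)/2})\bigr]^2dy=\frac{1}{(1-a)\theta\lambda^{(1-a)/2}}\int_0^{T(\lambda)}[\phi'(t)]^2dt,
\end{equation*}
\begin{equation*}
\int_0^1 y^{a}\phi(\theta y^{1-a}\lambda^{(1-a)/2})^2 dy=\frac{\theta^{-(1+a)/(1-a)}}{1-a}\lambda^{-(1+a)/2}\int_0^{T(\lambda)}\phi(t)^2 t^{-\alpha}dt,
\end{equation*}
with $\alpha=-2a/(1-a)$ as in \eqref{equazionediff}; in the first, the powers of $t$ from $y^{-a}$ and from $dy$ cancel exactly.

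Both inner $t$-integrals are bounded uniformly in $\lambda$: the first by property (ii) of Proposition~\ref{eqdiff}, the second because $\alpha<1$ makes $t^{-\alpha}$ integrable at $0$ (using $\phi(0)=1$) while the exponential decay (iii) takes care of infinity. Combining the spectral formulas with the substitution identities, the $\partial_y v$ contribution to the left-hand side of \eqref{prop v eq:1} is bounded by a constant multiple of
\begin{equation*}
\|\phi'\|_{L^2(0,\infty)}^2\int_0^\infty\lambda^{(1-a)/2}d\scal{E(\lambda)u}{u},
\end{equation*}
which is finite precisely when $s\ge(1-a)/2$. The $\|\nabla_\G v\|^2$ contribution takes the parallel form $\int\lambda^{(1-a)/2}\Phi(T(\lambda))d\scal{Eu}{u}$, with $\Phi(T):=\int_0^T\phi(t)^2 t^{-\alpha}dt$, and is controlled by the same Sobolev quantity. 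The $\|v\|^2$ contribution produces the apparently singular $\int\lambda^{-(1+a)/2}\Phi(T(\lambda))d\scal{Eu}{u}$, but near the bottom of the spectrum the small-$T$ asymptotic $\Phi(T(\lambda))\sim C\lambda^{(1+a)/2}$ (coming from $\phi(0)=1$ and $\alpha<1$) cancels the singular factor, leaving a bounded integrand; summing the three bounds yields \eqref{prop v eq:1}.

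For the $W^{2,2}$ statement on $\Sigma_+^\eps$, the same machinery is reapplied after one additional differentiation. Using the ODE $\phi''(t)=t^{-\alpha}\phi(t)$ one gets clean spectral expressions for $\|\mc L v\|^2$, $\|\nabla_\G\partial_y v\|^2$ and $\|\partial_y^2 v\|^2$; each extra derivative raises the $\lambda$-exponent by one, producing terms of order $\lambda^{(3-a)/2}$ matching the sharpened hypothesis $s\ge(3-a)/2$. The main obstacle is genuinely the behaviour at $y=0$: differentiating the prefactor $y^{-a}$ in $\partial_y v$ yields a term proportional to $y^{-a-1}$ in $\partial_y^2 v$, whose contribution to the weighted squared norm involves $\int_0^1 y^{-a-2}dy$, divergent for every $a>-1$. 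This is precisely why the cutoff $y>\eps$ must be imposed; on $\Sigma_+^\eps$ all $y$-prefactors are bounded, and the estimates of the first part carry over unchanged to give $v\in W^{2,2}_{\hat\G}(\Sigma_+^\eps;y^a dx\,dy)$.
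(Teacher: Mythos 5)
Your treatment of the $W^{1,2}$ estimate is correct and is essentially the paper's own argument: both proofs diagonalize via the spectral resolution, reduce each weighted $y$-integral to an integral of $\phi$ or $\phi'$ by the substitution $t=\theta y^{1-a}\lambda^{(1-a)/2}$ (the paper does the same in two steps, $y\sqrt\lambda=\tau$ followed by $s=\tau^{1-a}$), and conclude from $\phi'\in L^2((0,\infty))$ and the boundedness of $\phi$; your cancellation $\Phi(T(\lambda))\sim C\lambda^{(1+a)/2}$ for the $\|v\|^2$ term is a slightly longer route to what the paper gets directly from $0<\phi\le 1$.

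The second half, however, has a genuine gap. The sentence ``on $\Sigma_+^\eps$ all $y$-prefactors are bounded, and the estimates of the first part carry over unchanged'' fails for the term of $\partial_y^2v$ containing $\phi''$. If you bound $y^{-4a}$ by an $\eps$-dependent constant and then substitute as in part one, you are left with
\begin{equation*}
\lambda^{2-2a}\cdot C\,\lambda^{-(1+a)/2}\int_{T_\eps(\lambda)}^{T(\lambda)} t^{-\alpha}\,[\phi''(t)]^2\,dt,
\qquad T_\eps(\lambda)=\theta\,\eps^{1-a}\lambda^{(1-a)/2},
\end{equation*}
and since $\phi''(t)=t^{-\alpha}\phi(t)\sim t^{-\alpha}$ as $t\to 0$, the integrand behaves like $t^{-3\alpha}$, which is \emph{not} integrable at $0$ once $\alpha\ge 1/3$, i.e. $a\le -1/5$. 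So the inner $t$-integral is no longer uniformly bounded in $\lambda$ as it was in part one: it grows like $C_\eps\,\lambda^{(1+5a)/2}$ as $\lambda\to 0$ through the $\lambda$-dependent lower limit, and one must verify that this negative power is compensated by the factors in front. The clean way to do this is \emph{not} to discard the $y$-prefactor but to convert it, $y^{-4a}=C\,t^{2\alpha}\lambda^{2a}$, so that $t^{2\alpha}\cdot t^{-\alpha}[\phi'']^2=t^{-\alpha}\phi^2$ is integrable near $0$; this exponent bookkeeping is exactly the content of the paper's verification that $(m-h)(1-a)+2>0$, and it is the part of the proof you have omitted. Your diagnosis of \emph{why} the cutoff $y>\eps$ is necessary (the $y^{-a-1}\phi'$ term producing $\int_0^1 y^{-a-2}\,dy=\infty$) is correct and matches the structure of the paper's argument, but identifying the obstruction is not the same as proving the positive statement: as written, the $W^{2,2}$ bound on $\Sigma_+^\eps$ is asserted rather than established.
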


\begin{proof} 

The function $v$ belongs to $L^2(\ \Sigma_+; \, y^adx\,dy)$. Indeed
\begin{equation*}\begin{split}
\| v & \|_{L^2(\Sigma_+; \, y^adx\,dy)}^2 
= \int_0^1 dy\,y^a\| v(y,\cdot)\|_{L^2(\mathbb R^{n})}^2
\\&
= \int_0^1 dy\,  y^a \int_0^\infty \phi^2(\theta y^{1-a} \, \lambda^{(1-a)/2})\,d\| E(\lambda)u\|^2
\le C\|u\|_{L^2(\G)}^2,
\end{split}\end{equation*}
since $\phi$ is bounded.

On the other hand, if $\eps\ge 0$,
\begin{equation*}\begin{split}
\| v & \|_{W^{k,2}_{\hat G}(\Sigma_+; \, y^adx\,dy)}^2 
= \sum_{0\le h\le k}\sum_{|\beta|\le k-h} \| \partial_y^kX^\beta v  \|_{L^2(\Sigma_+; \, y^adx\,dy)}^2 
\\&=
\sum_{0\le h\le k}\int_\eps^1 dy\,y^a \int_\G dx\,\sum_{|\beta|\le k-h} \| X^\beta\partial_y^hv\|^2
\\&=
\sum_{0\le h\le k}\int_\eps^1 dy\,y^a \|\partial_y^hv\|^2_{W^{k-h,2}(\G)}
\\&\approx
\sum_{0\le h\le k}\int_\eps^1 dy\,y^a  \int_\G dx \|\mc L^{(k-h)/2}\partial_y^hv\|^2_{W^{k-h,2}(\G)}
\\&=
\sum_{0\le h\le k}\int_\eps^1 dy\,y^a  \int_0^\infty\lambda^{k-h} 
|\partial_y^h \phi(\theta y^{1-a} \, \lambda^{(1-a)/2})|^2\, d\| E(\lambda)u\|^2.
\end{split}\end{equation*}
Recalling that $$\sum_{j=1}^hm_j=m$$
and
$$
\sum_{j=1}^hjm_j=h
$$ 
the last term can be estimated by a sum of terms
of the form
\begin{equation*}\begin{split}
\int_\eps^1 dy\,y^a  \int_0^\infty\lambda^{k-h+(1-a)m} 
y^{2m(1-a)-2h}
| \phi^{(h)}(\theta y^{1-a} \, \lambda^{(1-a)/2})|^2\, d\| E(\lambda)u\|^2,
\end{split}\end{equation*}
with $m\le h$. If we put $y\sqrt{\lambda}=\tau$, the last term is estimated by
\begin{equation}\label{5 marzo}\begin{split}
\int_0^\infty\, & d\| E(\lambda)u\|^2 \lambda^{-\frac{a}{2}+k-h+(1-a)m-(1-a)m+h-\frac{1}{2}}
\\&
\hphantom{xxx}\cdot
\int_{\eps\sqrt{\lambda}}^\infty \tau^{2m(1-a)-2h+2a}
| \phi^{(h)}(\theta \tau^{1-a} )|^2 d(\tau^{1-a})
\\&
=
\int_0^\infty\,  d\| E(\lambda)u\|^2 \lambda^{k-\frac{a+1}{2}}
\\&
\hphantom{xxx}\cdot
\int_{(\eps\sqrt{\lambda})^{1-a}}^\infty s^{2m-2(h-a)/(1-a)}
| \phi^{(h)}(\theta s)|^2 ds
\end{split}\end{equation}
Consider now the case $k=1$ (and therefore $h=m=1$, since the case $h=0$ yields
the $L^2$-estimate we have already proved). Then we can take $\eps=0$ and
the last term becomes
\begin{equation*}\begin{split}
\int_0^\infty\, &  d\| E(\lambda)u\|^2 \lambda^{1-\frac{a+1}{2}}
\cdot
\int_{0}^\infty 
| \phi'(\theta s)|^2 ds
\\&
\le \int_0^\infty\,   d\| E(\lambda)u\|^2 (1+\lambda^{s})
\cdot
\int_{0}^\infty 
| \phi'(\theta s)|^2 ds
\le C \,\|u\|_{W^{s,2}(\G)}^2,
\end{split}\end{equation*}
by ii) above. 

Consider now the case $k=2$. In this case, we take $\eps>0$ and we split the last integral
in \eqref{5 marzo} as
\begin{equation*}\begin{split}
\int_0^1\,   d\| E(\lambda)u\|^2 \cdots 
+ \int_1^\infty\,   d\| E(\lambda)u\|^2  \cdots := I_1 + I_2.
\end{split}\end{equation*}
Obviously,
$$
I_2\le 
\int_1^\infty\,   d\| E(\lambda)u\|^2\,
\int_{\eps^{1-a}}^\infty s^{2m-2(h-a)/(1-a)}
| \phi^{(h)}(\theta s)|^2 ds < \infty,
$$
since $\phi^{(h)}(s)$ vanishes exponentially as $s\to\infty$.
Analogously,
\begin{equation}\label{5 marzo:2}\begin{split}
I_1\le 
\int_0^1\,  d\| E & (\lambda)u\|^2   \lambda^{2-\frac{a+1}{2}}
\int_{(\eps\sqrt{\lambda})^{1-a}}^1 \cdots  ds
\\&
+
\int_0^1\,  d\| E(\lambda)u\|^2 \lambda^{2-\frac{a+1}{2}}
\int_{1}^\infty \cdots  ds
\end{split}\end{equation}
Clearly, the second term in \eqref{5 marzo:2} is finite, again since
since $\phi^{(h)}(s)$ vanishes exponentially as $s\to\infty$.
Thus, we are reduced to estimate
\begin{equation*}\begin{split}
\int_0^1\, & d\| E(\lambda)u\|^2 \lambda^{2-\frac{a+1}{2}}
\\&
\hphantom{xxx}\cdot
\int_{(\eps\sqrt{\lambda})^{1-a}}^1 s^{2m-2(h-a)/(1-a)-2\alpha-2h+4}
|s^{\alpha+h-2} \phi^{(h)}(\theta s)|^2 ds
\\
\le C\,
\int_0^1\, & d\| E(\lambda)u\|^2 \lambda^{2-\frac{a+1}{2}}
\\&
\hphantom{xxx}\cdot
\int_{(\eps\sqrt{\lambda})^{1-a}}^1 s^{2m-2(h-a)/(1-a)-2\alpha-2h+4}
 ds,
\end{split}\end{equation*}
by Proposition \ref{eqdiff}, i).
If we keep in mind that  
$$
\int_0^1\,  d\| E(\lambda)u\|^2 \lambda^{2-\frac{a+1}{2}} < \infty
$$
since $u\in W^{s,2}_\G(\G)$, with $s\ge 2-\frac{a+1}{2}$, to achieve the proof of the proposition
we have but to show that
\begin{equation*}\begin{split}
2- & \frac{a+1}{2} + (1-a)\big( 2m-2(h-a)/(1-a)-2\alpha-2h+5\big)
\\&
= (m-h)(1-a)-h+4 \ge (m-h)(1-a) + 2
 > 0.
\end{split}\end{equation*}
On the other hand, if $h=1$, then necessarily $m=1$, so that $(m-h)(1-a) + 2 =2$,
whereas, if $h=2$, then either $m=1$ or $m=2$. In the first case 
$(m-h)(1-a) + 2 = a + 1 >0$. Finally, if $m=2$, then $(m-h)(1-a) + 2 = 2$,
achieving the proof of the proposition.
\end{proof}

\begin{theorem}[generalized subordination identity]\label{poisson}  If $u\in L^2(\G)$ and $y>0$, we set
\begin{equation*}\begin{split}
v(\cdot,y) : &= \phi(\theta y^{1-a} \, \mc L^{(1-a)/2})u := \int_0^\infty \phi(\theta y^{1-a} \, \lambda^{(1-a)/2})\,dE(\lambda)u,
\end{split}\end{equation*}
where $\theta:= (1-a)^{a-1}$ (we remind that $\phi$ is bounded, and therefore $v\in L^2(\G)$ for $y>0$). 

We denote by $h(t,\cdot)$ the heat kernel associated with $-\mc L$ as in \cite{folland}, and by
$P_\G(\cdot,y)$ the ``Poisson kernel''
\begin{equation}\label{representation of P}
P_\G(\cdot,y) : = C_a\, y^{1-a} \int_0^\infty t^{(a-3)/2} e^{- \frac{y^2}{4t}}  h(t,\cdot)\; dt,
\end{equation}
where
$$C_a = \dfrac{2^{a-1}}{\Gamma((1-a)/2)}.
$$
Then
$$
P_\G(\cdot,y) \ge 0
$$
by \cite{hunt}, \cite{folland}, Theorem 3.1, and
\begin{equation}\label{representation of v}
v(\cdot, y) = u\ast P_\G(\cdot, y).
\end{equation}

\end{theorem}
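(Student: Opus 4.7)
The plan is to verify \eqref{representation of v} spectrally: I will compute the action of convolution with $P_\G(\cdot,y)$ on each spectral component $dE(\lambda)u$ and match it with the scalar multiplier $\phi(\theta y^{1-a}\lambda^{(1-a)/2})$ defining $v$. First, I would recall that $h(t,\cdot)$ is the convolution kernel of the heat semigroup $e^{-t\mc L}$, so that $u\ast h(t,\cdot)=e^{-t\mc L}u=\int_0^\infty e^{-t\lambda}\,dE(\lambda)u$. Substituting the explicit representation \eqref{representation of P} of $P_\G(\cdot,y)$ into $u\ast P_\G(\cdot,y)$ and applying Fubini twice (once to interchange the $t$-integral with the group convolution, once to interchange with the spectral integral) reduces the theorem to the scalar identity
$$
\phi\bigl(\theta y^{1-a}\lambda^{(1-a)/2}\bigr)=C_a\, y^{1-a}\int_0^\infty t^{(a-3)/2}\,e^{-y^2/(4t)-t\lambda}\,dt,\qquad \lambda,y>0.
$$

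Next, I would verify this scalar identity by explicit computation. The substitution $t=y^2/(4s)$ transforms the right-hand side into $C_a\,2^{1-a}\,\int_0^\infty s^{-(1+a)/2}e^{-s-y^2\lambda/(4s)}\,ds$, and I would recognize this as the classical integral representation of the modified Bessel function
$$
K_\nu(z)=\tfrac{1}{2}\Bigl(\tfrac{z}{2}\Bigr)^\nu\int_0^\infty t^{-\nu-1}e^{-t-z^2/(4t)}\,dt,
$$
taken with $\nu=(a-1)/2$ and $z=y\sqrt\lambda$; using $K_{-\nu}=K_\nu$ and collecting constants, the right-hand side becomes a precise multiple of $y^{(1-a)/2}\lambda^{(1-a)/4}K_{(1-a)/2}(y\sqrt\lambda)$. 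On the other hand, with our choice $\alpha=-2a/(1-a)$ one has $k=(2-\alpha)/2=1/(1-a)$ and $1/(2k)=(1-a)/2$; combined with $\theta=(1-a)^{a-1}$ this yields $k^{-1}(\theta y^{1-a}\lambda^{(1-a)/2})^k=y\sqrt\lambda$. Substituting into the explicit formula $\phi(t)=c_\alpha\,t^{1/2}K_{1/(2k)}(k^{-1}t^k)$ from Proposition \ref{eqdiff}, and tracking the constant $c_\alpha=2^{(1+a)/2}\Gamma((1-a)/2)^{-1}(1-a)^{(1-a)/2}$, produces exactly the same expression as the right-hand side; this pins down the normalization $C_a=2^{a-1}/\Gamma((1-a)/2)$.

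The main obstacle is the justification of the two interchanges of integration. Since $h(t,\cdot)\ge 0$ (by \cite{hunt}, \cite{folland}) and the weight $t^{(a-3)/2}e^{-y^2/(4t)}$ is positive, Tonelli applies directly when $u\ge 0$ and extends to general $u\in L^2(\G)$ by the positive/negative parts decomposition; integrability for fixed $y>0$ follows from the Gaussian factor $e^{-y^2/(4t)}$ controlling the singularity at $t=0$ and from the decay of $h(t,x)$ and $e^{-t\lambda}$ at $t=\infty$. The Fubini step for the spectral integral is then routine using the positive scalar measure $d\|E(\lambda)u\|^2$. The delicate point of the proof is thus the Bessel-function identification in the previous paragraph, where careful bookkeeping of the constants $c_\alpha$, $\theta$, $k$ and $C_a$ is essential.
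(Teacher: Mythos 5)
Your proposal is correct and follows essentially the same route as the paper: both arguments reduce \eqref{representation of v}, via the spectral theorem and the identification $u\ast h(t,\cdot)=\int_0^\infty e^{-\lambda t}\,dE(\lambda)u$, to the scalar subordination identity expressing $\phi(\theta y^{1-a}\lambda^{(1-a)/2})$ as the Laplace transform of $C_a y^{1-a}t^{(a-3)/2}e^{-y^2/(4t)}$, and both establish that identity through the classical integral representation of $K_\nu$ (the paper computes it forward from Watson's formula (8), p.~182, you verify it backward via the substitution $t=y^2/(4s)$ — the same computation). Your constant bookkeeping is right and in fact confirms that the paper's internal definition $C_a=2^{(a-3)/2}c_\alpha\theta^{1/2}$ equals the stated $2^{a-1}\Gamma((1-a)/2)^{-1}$; your explicit attention to the Fubini/Tonelli interchanges is a welcome addition the paper leaves implicit.
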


\begin{proof} By identity (8), p. 182 of \cite{watson}, if $\nu>0$ and $z>0$, we can write
$$
K_\nu(z) = \frac12 \int_0^\infty \xi^{-\nu-1}e^{-\frac12 z(\xi+\frac1\xi)}\;d\xi.
$$
Then (keeping also in mind the definition of $\theta$)
\begin{equation*}\begin{split}
\phi (\theta z) &  :=  \frac12 c_\alpha \theta^{1/2} z^{1/2}
 \int_0^\infty \xi^{-\frac{1}{2k}-1}e^{-\frac{\theta^k}{2k} z^k(\xi+\frac1\xi)}\;d\xi
 \\&
 =
 \frac12 c_\alpha \theta^{1/2} z^{1/2}
 \int_0^\infty \xi^{-\frac{1}{2k}-1}e^{-\frac{\theta^k}{2k} z^k\xi}\;
 e^{-\frac{\theta^k}{2k}  \frac{z^k}\xi}\;d\xi
 \\&
 =
   2^{(a-3)/2} c_\alpha z^{1/2}  \theta^{1/2} 
 \int_0^\infty \tau^{(a-3)/2}e^{- \tau\,z^k}\;
 e^{- \frac{z^k}{4\tau}}\;d\tau
 \quad \mbox{(putting $\frac{\theta^k}{2k}\xi =\tau$)}.
\end{split}\end{equation*}
Hence
\begin{equation*}\begin{split}
\phi(\theta \lambda^{\frac{1-a}{2}}y^{1-a}) &=
 2^{(a-3)/2} c_\alpha \lambda^{\frac{1-a}{4}}y^{(1-a)/2}  \theta^{1/2} 
 \int_0^\infty \tau^{(a-3)/2}e^{- \tau\,\sqrt{\lambda}y}\;
 e^{- \frac{\sqrt{\lambda}y}{4\tau}}\;d\tau
 \\&
 =
 2^{(a-3)/2} c_\alpha \theta^{1/2} y^{1-a}  
 \int_0^\infty t^{(a-3)/2}e^{- \lambda \, t}\;
 e^{- \frac{y^2}{4t}}\;dt,
\end{split}\end{equation*}
putting $y\tau=\sqrt{\lambda} t$. In other words, $\lambda\to \phi(\theta \lambda^{\frac{1-a}{2}}y^{1-a}) $
is, up to a multiplicative constant,  the Laplace transform of
$t\to t^{(a-3)/2}\; e^{- \frac{y^2}{4t}}$.

For sake of brevity we set $C_a:=  2^{(a-3)/2} c_\alpha \theta^{1/2}$ (we remind
that $\alpha$ depends on $a$). Thus  we can write now
\begin{equation*}\begin{split}
v(\cdot,y) & =  C_a y^{1-a}\int_0^\infty  \big( \int_0^\infty t^{(a-3)/2}e^{- \lambda \, t}\;
 e^{- \frac{y^2}{4t}}\;dt  \big)  dE(\lambda)u
 \\&
 =C_a  y^{1-a}\int_0^\infty t^{(a-3)/2} e^{- \frac{y^2}{4t}}
 \big( \int_0^\infty  e^{- \lambda \, t} dE(\lambda)u\big)\; dt
  \\&
=
 C_a y^{1-a} \int_0^\infty t^{(a-3)/2} e^{- \frac{y^2}{4t}} u\ast h(t,\cdot) \; dt
=
 u\ast P_\G(\cdot, y).
 \end{split}\end{equation*}
\end{proof}

\begin{remark}\label{different} Formulas \eqref{representation of v} 
and \eqref{representation of P} make possible
to give a different and more explicit representation of the lifting $v$
of $u$. On the other hand, the estimates of $h(t,\cdot)$ proved in
\cite{folland} and \cite{folland_stein} yield analogous estimates 
for $P_\G$. Indeed, if $I$ is a multi-index, then, if $\rho:=\rho(x)$,
\begin{equation*}\begin{split}
|X^IP_\G(x,y)|& \le C  \int_0^\infty t^{(a-3)/2}|X^Ih(t,x)|\,dt
\\&
= C \rho^{a-1}  \int_0^\infty \tau^{(a-3)/2}|X^Ih({\tau\rho^2},x)|\,d\tau
\end{split}\end{equation*}
By \cite{folland_stein}, identity (1.73), we write now
$$
X^Ih({\tau\rho^2} ,x) = (\sqrt{\tau}\rho)^{-Q-d(I)} |X^Ih(1,\frac{x}{\sqrt{\tau}\rho})|,
$$
and we notice that, since $h(1,\cdot)\in \mc S(\G)$ (by \cite{folland_stein}, 
Proposition 1.74), if $N>0$, then
$$
 |X^Ih(1, \frac{x}{\sqrt{\tau}\rho})| \le C(1+\frac{1}{\sqrt{\tau}})^{-N}
 \le C\frac{\tau^{N/2}}{1+ \tau^{N/2}}.
$$
Thus, eventually, 
\begin{equation*}\begin{split}
|X^IP_\G(x,y)|& \le C \rho^{a-1-Q-d(I)} \int_0^\infty \tau^{(a-3-Q-d(I))/2}\frac{\tau^{N/2}}{1+ \tau^{N/2}}.\,d\tau
\\&
\le
 C \rho^{a-1-Q-d(I)}
\end{split}\end{equation*}
for large $\rho$. Then the lifting convolution $u\ast P_\G$ is well defined as long as $u(x)$ does not
grow too fast as $x\to\infty$. We refer to \cite{caffarelli_silvestre} for similar growth
conditions in the Euclidean setting.

 Moreover, if $u$ is sufficiently smooth,
\begin{equation}
\begin{split}
 &y^a\frac{v(x,y)-v(x,0)}{y}= y^a\frac{u\ast P_\G(\cdot, y)-u(x)}{y}\\
 &=\left(C_a  \int_0^\infty t^{(a-3)/2} e^{- \frac{y^2}{4t}} u\ast h(t,\cdot) \; dt\right.\\
 &\hphantom{aaa}\left.-C_a u(x) \int_{\G}\int_0^\infty t^{(a-3)/2} e^{- \frac{y^2}{4t}}  h(t,\xi^{-1}x) \; dtd\xi\right)\\
 &=C_a \int_{\G}\int_0^\infty t^{(a-3)/2} e^{- \frac{y^2}{4t}}  h(t,\xi^{-1}x) \; dt(u(\xi)-u(x))d\xi
\end{split}
\end{equation}
On the other hand
\begin{equation}
\begin{split}
\lim_{y\to 0^+}C_a\int_0^\infty t^{(a-3)/2} e^{- \frac{y^2}{4t}}  h(t,\xi^{-1}x) \; dt=\tilde{C}_a\tilde{R}_{a-1}.
\end{split}
\end{equation}
Thus
$$
\lim_{y\to 0^+}y^a\frac{v(x,y)-v(x,0)}{y}=C_a\int_{\G}(u(\xi)-u(x))\tilde{R}_{a-1}(\xi)d\xi=\tilde{C}_a\mathcal{L}^{\frac{1-a}{2}}u(x).
$$
\end{remark}
\begin{theorem}\label{premain}
Let $u\in W^{1-a,2}_\G(\G)$ be given, $u\ge 0$, and assume $\mc L^{(1-a)/2}u = 0$
in an open set $\Omega$.
With the notations of Theorem \ref{poisson},
we denote by $\hat v$ the function on $\hat \G$ obtained continuing $v$ by parity
across $y=0$.
Then
\begin{itemize}
\item[i)] $\hat v\ge 0$;
\item[ii)] $\hat v\in W^{1,2}_{\hat\G,  \mathrm{loc}}(\hat\Omega; \, y^adx\,dy)$,
where $\hat\Omega:= \Omega\times (-1,1)$;
\item[iii)] $\hat v$ is a weak solution of the equation
$$
\divgh \big(|y|^a\nabla_{\hat\G} v\big)= 0 \quad\mbox{in $\hat\Omega$.}
$$ 
\end{itemize}
\end{theorem}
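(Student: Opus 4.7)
The proof splits into the three assertions; (i) and (ii) are essentially bookkeeping while the substance lies in (iii). For (i), combine Theorem~\ref{poisson} (which yields $v(\cdot,y)=u\ast P_\G(\cdot,y)$ with $P_\G\ge 0$) with $u\ge 0$ to get $v\ge 0$, and extend evenly to obtain $\hat v\ge 0$. For (ii), Proposition~\ref{prop v} applies with $s=1-a$: since $-1<a<1$ we have $1-a\ge(1-a)/2=1-(a+1)/2$, so $v\in W^{1,2}_{\hat\G}(\Omega\times(0,1);\,y^a\,dx\,dy)$. Boundedness and continuity of $\phi$ with $\phi(0)=1$ give, via spectral dominated convergence, $v(\cdot,y)\to u$ in $L^2(\G)$ as $y\to 0^+$, so $\hat v$ has no jump across $\{y=0\}$; a direct test-function computation then identifies the distributional derivatives of $\hat v$ on $\hat\Omega$ as the even reflections of $X_j v$ and the odd reflection of $\partial_y v$, with no singular mass on $\{y=0\}$. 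Parity of $|y|^a$ thus places $\nabla_{\hat\G}\hat v$ in $L^2_{\mathrm{loc}}(\hat\Omega;|y|^a\,dx\,dy)$.

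The core of (iii) rests on two identities. The first is the pointwise ODE
\begin{equation}\label{plan:ode}
\partial_y\big(y^a\,\partial_y v(\cdot,y)\big)=y^a\,\mc L v(\cdot,y),\qquad y>0,
\end{equation}
obtained by differentiating the spectral formula for $v$ twice in $y$, applying the chain rule, and substituting $\phi''(t)=t^{2a/(1-a)}\phi(t)$ from Proposition~\ref{eqdiff} (with $\alpha=-2a/(1-a)$); the choice $\theta=(1-a)^{a-1}$ is tailored precisely so that the algebraic constants collapse to $1$. Since $|y|^a$ commutes with $X_j$, \eqref{plan:ode} is exactly $\divgh(|y|^a\nabla_{\hat\G}v)=0$ in the classical sense for $y>0$. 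The second identity is the boundary trace
\begin{equation*}
y^a\,\partial_y v(\cdot,y)=\theta(1-a)\int_0^\infty\phi'\big(\theta y^{1-a}\lambda^{(1-a)/2}\big)\lambda^{(1-a)/2}\,dE(\lambda)u\ \longrightarrow\ \theta(1-a)\phi'(0)\,\mc L^{(1-a)/2}u
\end{equation*}
in $L^2(\G)$ as $y\to 0^+$, proved by dominated convergence using that $\phi'$ is bounded with finite limit at $0$ (Proposition~\ref{eqdiff}) and that $u\in W^{1-a,2}_\G(\G)$ gives $\int\lambda^{1-a}\,d\|E(\lambda)u\|^2<\infty$. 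By hypothesis $\mc L^{(1-a)/2}u=0$ a.e.\ on $\Omega$, so the trace vanishes in $L^2(\Omega)$.

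Given these, the weak formulation reduces to an integration by parts. For $\psi\in\mc D(\hat\Omega)$, parity of $\hat v$, $X_j\hat v$, $\partial_y\hat v$ and $|y|^a$ lets one replace $\psi$ by its even-in-$y$ part $\psi_e$ and halve the integration to $\Omega\times(0,1)$, with test function $\varphi=\psi_e\in\mc D(\Omega\times[0,1))$ (no vanishing at $y=0$ required). On $\Omega\times(\eps,1)$ I use $X_j^*=-X_j$ on $L^2(\G)$ --- valid since $v(\cdot,y)$ lies in the domain of every power of $\mc L$ by the exponential decay of $\phi$ at infinity --- to obtain $\int_\G X_j v\,X_j\varphi\,dx=\int_\G\mc L v\,\varphi\,dx$ for each $y>0$, and then integrate by parts in $y$ via \eqref{plan:ode}, producing a single boundary contribution $-\int_\Omega\eps^a\partial_y v(x,\eps)\varphi(x,\eps)\,dx$ (the $y=1$ piece vanishes by support). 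The two interior $\int y^a\mc L v\,\varphi$ contributions cancel; letting $\eps\to 0$, the boundary term vanishes by Cauchy--Schwarz and the vanishing $L^2(\Omega)$-trace, while the left-hand side converges by dominated convergence (using $X_j v,\partial_y v\in L^2(y^a\,dx\,dy)$). The main obstacle is that $u\in W^{1-a,2}_\G(\G)$ does \emph{not} reach the threshold $s\ge(3-a)/2$ of Proposition~\ref{prop v} that would give $v\in W^{2,2}_{\hat\G}$ locally, so neither $y^a\mc L v$ nor $\partial_y(y^a\partial_y v)$ need be globally integrable down to $y=0$. The argument must therefore be carried out slicewise on $\{\eps<y<1\}$ and rely on the exact cancellation of the two interior contributions, confining the $\eps\to 0$ limit to the boundary term, where the trace computation does the work.
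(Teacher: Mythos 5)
Your argument is correct and reaches the same endgame as the paper (even reflection, absence of singular mass on $\{y=0\}$, integration by parts on $\G\times(\eps,1)$, identification of the boundary term with a multiple of $\mc L^{(1-a)/2}u$ via the $L^2$-limit of $y^a\partial_y v$, which vanishes on $\mathrm{supp}\,\psi$), but you justify the interior computation differently. The paper first regularizes, setting $u_\eta=(1+\eta\mc L)^{-1}u\in W^{3-a,2}_\G(\G)$ so that the second part of Proposition \ref{prop v} applies and $\hat v_\eta\in W^{2,2}_{\hat\G}(\Sigma_\pm^\eps;y^a dx\,dy)$, verifies $\divgh(|y|^a\nabla_{\hat\G}\hat v_\eta)=0$ and the variational identity away from $y=0$ for $\hat v_\eta$, and then lets $\eta\to 0$ using $u_\eta\to u$ in $W^{1-a,2}_\G(\G)$ together with the stability estimate \eqref{prop v eq:1}; the authors explicitly remark that this detour is forced by the abstract (spectral) definition of $v$, which hides the divergence structure. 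You instead work directly with $v$, observing that for each fixed $y\ge\eps$ the exponential decay of $\phi$ (and the monotonicity forced by convexity and positivity) places $v(\cdot,y)$ in the domain of every power of $\mc L$ with bounds uniform on $[\eps,1]$, so that the slicewise integration by parts in $x$, the Bochner-smoothness in $y$, and the pointwise identity $\partial_y(y^a\partial_y v)=y^a\mc L v$ (whose constant you correctly verified collapses to $1$ thanks to $\theta=(1-a)^{a-1}$) are all legitimate on $\{\eps<y<1\}$ without any approximation of $u$. This buys a shorter proof that dispenses with the $\eta$-limit and with the $W^{2,2}$ threshold $s\ge(3-a)/2$ of Proposition \ref{prop v} (which, as you note, $u\in W^{1-a,2}_\G(\G)$ does not meet); the cost is that you must, as you do, confine all second-order quantities to $y\ge\eps$ and rely on the exact cancellation of the two interior contributions before sending $\eps\to0$. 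Both routes then conclude identically through the trace computation $\eps^a\partial_y v(\cdot,\eps)\to(1-a)^a\phi'(0)\,\mc L^{(1-a)/2}u$ in $L^2(\G)$.
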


\begin{proof} Statement i) follows from previous Theorem \ref{poisson}.
%
%
%

The proofs of ii) and
  iii) are divided in several steps.

\noindent\textbf{Step 1.} From now on, we write $\Sigma_- := \G\times (-1,0)$ and
$\Sigma_-^\eps:= \G\times(-1,-\eps)$.
 If $\eta>0$, we set 
$$u_\eta:= (1+\eta \mc L)^{-1} u:=\int_0^\infty (1+\eta\lambda)^{-1} dE(\lambda)u.
$$
Then $u_\eta\in W^{3-a,2}_\G(\G)$ so that, with the notation of \eqref{def v}, by
Proposition \ref{prop v}, $\hat v_\eta \in W^{2,2}_{\hat \G, \mathrm{loc}}(\Sigma_\pm; \, y^adx\,dy)$.
Moreover, just performing computations, we see that
$$
\divgh \big(|y|^a\nabla_{\hat\G} \hat v_\eta\big)= 0
$$
in $\Sigma_\pm$. 
Moreover, if $\psi\in \mc D(\Sigma_\pm)$, then
$$
\int_{\Sigma_\pm^\eps} \scal{\nabla_{\hat\G} \hat v_\eta}{\nabla_{\hat\G} \psi} | y|^adx\,dy
= 0.
$$

\noindent\textbf{Step 2.} The function $\hat v $ belongs to both 
$W^{1,2}_{\hat \G}(\Sigma_\pm; \, y^adx\,dy)$ (by Proposition
\ref{prop v}) and in addition
$$
\int_{\Sigma_\pm^\eps} \scal{\nabla_{ \G} \hat v}{\nabla_{\hat\G} \psi} | y|^adx\,dy
= 0
$$
for any  $\psi \in\mc D(\hat \Sigma_\pm)$. Indeed, by \eqref{prop v eq:1}, we have
but to notice that $u_\eta \to u$ in $W^{1-a,2}_\G(\G)$. Indeed
$$
\|u_\eta - u\|_{W^{1-a,2}_\G(\G)}^2 =
\int_0^\infty \lambda^{1-a} | (1+\eta\lambda)^{-1} -1 |^2\,d\|E(\lambda)u\|^2 \to 0
$$
as $\eta\to 0$, by dominated convergence theorem. Since the function $y\to |y|^a$ is smooth away from $\{y=0\}$, then
$\hat v_\eta$ is smooth in $\Sigma_\pm$, by classical H\"ormander's theorem (\cite{Ho}).

We notice that this argument going through regularization, equation in non-divergence form, integration
by parts and variational equation is required by our abstract arguments that hides the divergence
structure of the equation. 

\noindent\textbf{Step 3.} Because of the properties of $A_2$-weights, 
$\hat v \in W^{1,1}_{\hat \G,\mathrm{loc}}(\Sigma_\pm)\cap L^1_{\mathrm{loc}}(\Sigma)$. Moreover,
with an obvious meaning of symbols,
\begin{equation}\label{by parts eq: 3}
X_j \hat v  = \widehat{X_jv} \quad\mbox{in $\Sigma$, for $j=1,\dots,m$}
\end{equation}
and
\begin{equation}\label{by parts eq: 4}
\partial_y \hat v = \pm \widehat{\partial_y v}\quad\mbox{in $\Sigma_\pm$}.
\end{equation}
Clearly, this yields $\hat v \in W^{1,2}_{\hat \G,\mathrm{loc}}(\Sigma; \, y^adx\,dy)$
and therefore ii) holds.
Now, \eqref{by parts eq: 3} is obvious. As for \eqref{by parts eq: 4}, if
$\psi \in\mc D(\hat \Omega)$, by divergence theorem
\begin{equation}\label{by parts eq: 5}\begin{split}
\int_\Sigma & \hat v( \partial_y\psi)\, dxdy 
=
 \lim_{\eps\to 0} \int_{\Sigma_+^\eps} \hat v (\partial_y\psi) \, dxdy
 +  \lim_{\eps\to 0} \int_{\Sigma_-^\eps} \hat v (\partial_y\psi) \, dxdy
 \\&
 =  
 \lim_{\eps\to 0} \int_{\Omega}  v(\cdot,\eps) \psi(\cdot,\eps)\, dx
 -  \lim_{\eps\to 0} \int_{\Omega}v(\cdot, -\eps) \psi(\cdot,\eps) \, dx
 \\&
- \lim_{\eps\to 0} \int_{\Sigma_+^\eps} ( \partial_y v) \psi\, dxdy
 +  \lim_{\eps\to 0} \int_{\Sigma_-^\eps} (\widehat{\partial_y v})\psi\, dxdy.
\end{split} \end{equation}
Since $\hat v$ is locally H\"older continuous up to $y=0$
$$
 \lim_{\eps\to 0} \int_{\Omega}  v(\cdot,\eps) \psi(\cdot,\eps)\, dx
 -  \lim_{\eps\to 0} \int_{\Omega}v(\cdot, -\eps) \psi(\cdot,\eps) \, dx
 = 0
 $$
 and the assertion follows.

\noindent\textbf{Step 4.} By divergence theorem, if $\eps\in (0,1)$ 
and $\psi \in\mc D(\hat \Omega)$, then
\begin{equation}\label{by parts eq: 1}\begin{split}
\int_{\Sigma_\pm^\eps} \scal{\nabla_{\hat\G} \hat v}{\nabla_{\hat\G} \psi} | y|^adx\,dy =  
\int_{\Omega}\eps^a \partial_y\hat v (x,\pm\eps)\psi(x,\pm\eps)\,dx
\end{split} \end{equation}
Take now the limit as $\eps\to 0$. Clearly
$$
\int_{\Sigma_\pm^\eps} \scal{\nabla_{\hat\G} \hat v}{\nabla_{\hat\G} \psi} | y|^adx\,dy
\to
\int_{\hat\Omega} \scal{\nabla_{\hat\G} \hat v}{\nabla_{\hat\G} \psi} | y|^adx\,dy
$$
as $\eps\to 0$. If we show that
\begin{equation}\label{by parts eq: 6}
\eps^{a}\partial_y\hat v (x,\pm\eps) \to (1-a)^a \phi'(0) \mc L ^{\frac{1-a}{2}}u \quad \mbox{in $L^2(\G)$},
\end{equation}
then  assertion iii) follows since $\mc L ^{\frac{1-a}{2}}u$ vanishes on $\mathrm{supp}\, \psi$.

To prove \eqref{by parts eq: 6}, we write
\begin{equation*}\begin{split}
\| \eps^{a} & \partial_y\hat v (x,\pm\eps) -  \phi'(0) \mc L ^{\frac{1-a}{2}}u\|^2_{L^2(\G)} 
\\&
=
(1-a)^{2a} \int_0^\infty |\phi'(\theta \lambda^{\frac{1-a}{2}}\eps^{1-a}) - \phi'(0)|^2 \lambda^{1-a}
d\|E(\lambda)u\|^2,
\end{split}\end{equation*}
and the assertion follows since $\phi'$ is bounded.

This achieves the proof of the theorem.
\end{proof}

\begin{remark}\label{trace rem}
By Theorem \ref{FKS}, $\hat v$ is locally H\"older continuous, and hence its trace$\hat v(\cdot, 0)$ on $\{y=0\}$
is well defined and it is straightforward to see that $v(\cdot,0)=u$. 

On the other hand, by a classical
interpolation theorem (\cite{LM}, Theorem 10.1),
if $\hat v$ belongs to $ W^{1,2}_{\G,  \mathrm{loc}}(\hat\Omega; \, y^adx\,dy)$, then its trace
$u$ belongs to $W^{1-a,2}_{\G, \mathrm{loc}}(\Omega)$. This shows that our assumption
$u\in W^{1-a,2}_{\G}(\G)$ is optimal as long as we are concerned with local regularity.
\end{remark}

\begin{theorem}\label{th main}
Let $-1<a<1$ and let $u\in W^{1-a,2}_\G(\G)$ be given, $u\ge 0$ on all of $\G.$ Assume $\mc L^{(1-a)/2}u = 0$
in an open set $\Omega\subset\G$. 
\\
\hphantom{xxx}Then  there exist $C,b>0$ (independent of $u$) such that
the following invariant Harnack inequality holds:
$$
\sup_{B_c(x,r)} u \le C \inf_{B_c(x,r)} u 
$$
for any metric ball $B_c(x,r)$ such that $B_c(x,b r)\subset \Omega$. 

\end{theorem}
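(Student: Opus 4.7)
The plan is to reduce the non-local Harnack inequality for $\mc L^{(1-a)/2}$ to the local weighted Harnack inequality provided by Theorem \ref{FKS}, using the lifting $\hat v$ constructed in Theorem \ref{premain} as the bridge between the two. The core philosophy is exactly that of Caffarelli--Silvestre: the non-locality is traded for one extra variable $y$, at the cost of introducing the degenerate weight $|y|^a$.

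First, I apply Theorem \ref{premain} to the given $u$. This produces a function $\hat v$ on $\hat\G = \G \times \R$ with the following properties:
(i) $\hat v \ge 0$ on all of $\hat\G$ (since the Poisson kernel $P_\G(\cdot, y)$ is non-negative and $u \ge 0$);
(ii) $\hat v \in W^{1,2}_{\hat\G, \mathrm{loc}}(\hat\Omega;\, |y|^a dx\,dy)$, where we may take $\hat\Omega := \Omega \times \R$ since the spectral construction and the parity extension make sense globally in $y$;
(iii) $\hat v$ is a weak solution of $\divgh\bigl(|y|^a \nabla_{\hat\G} \hat v\bigr) = 0$ in $\hat\Omega$;
(iv) by Remark \ref{trace rem}, $\hat v(\cdot, 0) = u$.

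Next, the hypothesis $-1 < a < 1$ is used precisely at the following point: by Remark \ref{A2 product}, the function $\omega(x, y) := |y|^a$ is an $A_2$-weight with respect to the Carnot--Carath\'eodory metric $\hat d_c$ of $\hat\G$. Consequently, Theorem \ref{FKS} applies to the weighted subelliptic equation satisfied by $\hat v$ on $\hat\Omega$, and yields both local H\"older continuity of $\hat v$ up to $\{y = 0\}$ and the invariant Harnack inequality
$$
\sup_{\hat B_c((x,0), r)} \hat v \;\le\; \hat C \inf_{\hat B_c((x,0), r)} \hat v,
$$
valid whenever $\hat B_c((x, 0),\, \hat b\, r) \subset \hat\Omega$, with constants $\hat C, \hat b > 0$ depending only on $\G, a$.

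Finally, I pass from $\hat v$ back to $u$ using Lemma \ref{balls}. The first inclusion $\hat B_c((x, 0), r) \cap \{y = 0\} = B_c(x, r) \times \{0\}$ combined with (iv) immediately gives
$$
\sup_{B_c(x, r)} u \;\le\; \sup_{\hat B_c((x, 0), r)} \hat v, \qquad \inf_{\hat B_c((x, 0), r)} \hat v \;\le\; \inf_{B_c(x, r)} u,
$$
so that chaining these two inequalities with the Harnack inequality above yields the desired bound for $u$. To convert the containment hypothesis, the second part of Lemma \ref{balls} gives a constant $\sigma_1 > 0$ with $\hat B_c((x, 0), \sigma_1 \rho) \subset B_c(x, \rho) \times (-\rho, \rho) \subset \hat\Omega$ whenever $B_c(x, \rho) \subset \Omega$; setting $b := \hat b/\sigma_1$ guarantees that $B_c(x, br) \subset \Omega$ implies $\hat B_c((x, 0), \hat b r) \subset \hat\Omega$, so the application of Theorem \ref{FKS} is legitimate. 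The main conceptual obstacle, namely showing that the lifted function is in fact a local weak solution of an $A_2$-weighted divergence-form equation on a full open neighbourhood of $\Omega \times \{0\}$, has already been handled in Theorem \ref{premain}; the remaining work here is purely a matter of chaining the ingredients and keeping track of the ball comparisons.
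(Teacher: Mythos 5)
Your proposal is correct and follows essentially the same route as the paper: lift $u$ to $\hat v$ via Theorem \ref{premain}, apply the weighted Harnack inequality of Theorem \ref{FKS} (legitimate since $|y|^a$ is an $A_2$-weight for $-1<a<1$), and transfer back to $u$ via the ball comparisons of Lemma \ref{balls}. Your treatment of the containment condition (adjusting $b$ by the constant $\sigma_1$ from Lemma \ref{balls}) is in fact slightly more careful than the paper's, which states the chain of inequalities without spelling this out.
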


\begin{proof}
Let $C,b$ be as in Theorem \cite{FKS}. By Theorems \ref{premain},
\cite{FKS} and by Lemma \ref{balls}, we have:
\begin{equation*}\begin{split}
\sup_{B_c(x,r)}  & u = \sup_{ \hat B_c((x,0),r)\cap \{y=0\}} \hat v
\le \sup_{ \hat B_c((x,0),r)} \hat v \le C \inf_{ \hat B_c((x,0),r)} \hat v
\\&
\le \inf_{ \hat B_c((x,0),r)\cap \{y=0\}} \hat v = \inf_{B_c(x,r)} u.
\end{split}\end{equation*}

\end{proof}

\bibliography{ferrari_franchi_Harnack_Carnot}

\bigskip

\noindent{\it
Fausto Ferrari and Bruno Franchi
\par\noindent
Dipartimento
di Matematica\par\noindent Piazza di
Porta S.~Donato 5
\par\noindent
40126 Bologna, Italy;}
\par\noindent
e-mail:
\par\noindent
fausto.ferrari@unibo.it
\par\noindent
bruno.franchi@unibo.it

\end{document}